\theoremstyle{thmstyleone}%
\theoremstyle{thmstyletwo}%
\newtheorem{Tma}{Theorem}
\newtheorem{Def}[Tma]{Definition}
\newtheorem{Lema}[Tma]{Lemma}
\newtheorem{Propo}[Tma]{Proposition}
\newtheorem{Coro}[Tma]{Corollary}
\theoremstyle{thmstylethree}%
\def\log{\operatorname{log}}
\def \res{\operatorname{res}}
\def\O{\mathcal{O}}
\newcommand\oo{
  \mathchoice
    {{\scriptstyle\mathcal{O}}}    {{\scriptstyle\mathcal{O}}}    {{\scriptscriptstyle\mathcal{O}}}    {\scalebox{.7}{$\scriptscriptstyle\mathcal{O}$}}  }
\begin{document}

\title[Article Title]{Restricted analytic valued fields with partial exponentiation}


\author*[1]{\fnm{Leonardo} \sur{Ángel}}\email{jangel@pedagogica.edu.co}

\author*[2]{\fnm{Xavier} \sur{Caicedo}}\email{xcaicedo@uniandes.edu.co}


\affil*[1]{\orgdiv{Departamento de Matemáticas}, \orgname{Universidad Pedagógica Nacional}, \orgaddress{\street{Calle 72 No.
11-86}, \city{Bogotá}, \postcode{110221}, \country{Colombia}}}

\affil[2]{\orgdiv{Departamento de Matemáticas}, \orgname{Universidad de los Andes}, \orgaddress{\street{Cra. 1. No.
18A-10}, \city{Bogotá}, \postcode{111711},  \country{Colombia}}}


\abstract{Non-archimedean models of the theory of the real ordered field with restricted analytic functions may not support a total exponential function, but they always have partial exponentials defined in certain convex subrings. On face of this, we study the first order theory of non-archimedean ordered
valued fields with all restricted analytic functions and an
exponential function defined in the valuation ring, which extends the
restricted analytic exponential. We obtain model completeness and
other desirable properties for this theory. In particular, any model embeds in a model where the partial exponential extends to a total one.}

\keywords{Restricted analytic function, ordered valued field, convex valuation, exponential function, model complete, weak o-minimality}


\pacs[MSC Classification]{03C10, 03C64, 12J25,12L12}

\maketitle

\section{Introduction}\label{sec1}

It is well known that the theory $T_{an}$ of the ordered real
field with restricted analytic functions is model complete and o-minimal\
(see \cite{Gabrielov}, \cite{Van9}, \cite{Van8}), and remains so when
expanded to the theory $T_{an}(\exp )$ with the total exponential function
(see \cite{Van1}, \cite{Wilk1}, \cite{Haskell}). However, not every model of 
$T_{an}$ admits total exponentiation. In fact, Kuhlmann, Kuhlmann and Shelah
proved in \cite{Kuhshe} that no Hahn field $\mathbb{R}((t^{\Gamma }))$ of
well based power series with exponents in a nontrivial divisible ordered abelian group $\Gamma $ admits a total
exponential function, although these fields are natural models of $T_{an}.$
Similarly, it may be seen that the field of logarithmic transseries (see \cite{van0}, \cite[Apendix A]{Van6}) is a 
model of $T_{an}$ because it is a union of Hahn fields, but it is not closed under exponentiation.

On the other hand, it is easy to extend the restricted analytic exponential
of any non-archimedean model of $T_{an}$ to an exponential function of its natural valuation
ring. More generally, any proper convex unital subring of a non-archimedean
model of $T_{an}$ (automatically a valuation ring) supports such an
exponential function if and only if its residue field is an exponential
field (see Proposition \ref{lema10}).

Motivated by this observation and the fact that valuations have become a
useful tool in the search of ordered fields equipped with an exponential
function (see \cite{Kuh4},\cite{Kuhlibro}, \cite{Kuh5}, \cite{Van2}), we focus in this
paper on the study of the first order theory $T_{an}(\O$-$\exp )$
of structures of the form $(K,\O,\exp )$ where $K$ is a model of $%
T_{an}$, $\O$ is a proper convex unital subring of $K$, and $\exp :%
\O\rightarrow \O$ is a surjective homomorphism from the
additive group of $\O$ onto its multiplicative group of positive
units $(\O^{\times})^{>0}$, which agrees in the unit $K$-interval $I_K=[-1,1]_K$ with the restricted analytic
exponential of $T_{an}$.

We show that for any model $(K,\O,\exp )$ of this theory the residue field $\res(K)$ equipped with the induced exponential is in fact a model of $T_{an}(\exp )$. Conversely, for any ordered valued field $(K,\mathcal{O)},$ if $(\res(K),e)$ is a model of 
$T_{an}(\exp )$, $e$ induces an exponential $\exp $ in $\O$ such that $(K,\O,\exp )$ is a model of $T_{an}(\O$-$\exp )$.

Next, following some ideas used to obtain quantifier elimination for $T_{an}(\exp ,\log)$ in \cite{Van1} and for $T_{convex}$ in \cite{Van10}, we prove our main results:\\

\textit{The theory }$T_{an}(\O$-$\exp )$\textit{ is model complete, complete, and weakly o-minimal. Moreover, it has
quantifier elimination when expanded with the partial inverse} $\log$
\textit{of} $\exp $,\textit{ and} $\ ^{-1},$ $\sqrt[n]{},$ $n\geq 1.$\\

Completeness follows from the existence of a prime model, and
quantifier elimination follows from the existence of closures of substructures. Another consequence of model completeness and the existence of a prime model is that
any model of this theory may be embedded in one
where the partial exponential\ of the valuation ring may be extended to a
total exponential. From this, weak o-minimality follows readily by the o-minimality of $T_{an}(\exp)$ and a result of Baizhanov \cite{Baijanov,Baijanov2}.

The structure of the paper is as follows. In Section 2, we include some basic facts about valued ordered fields and some useful
results on the theories $T_{an}$, $T_{an}(\exp ),$ and the theory $T_{convex} $ introduced by van den Dries and Lewenberg in \cite{Van10}. In
Section 3 we introduce the theory $T_{an}(\O$-$\exp )$ and prove it is model complete in Section 4. In Section 5, we prove quantifier elimination, the existence of a prime
model, and weak o-minimality.

For the general notions and facts of model theory, we refer the reader to 
\cite{Chang, Hod}, and for those of valued fields to \cite{End, Eng, main}.

\section{Preliminaries}

Throughout this paper, all rings considered will be commutative and unital. For a ring $A$, $A^{\times }$ will denote the multiplicative group of units of $A$, and we will indulge with the common abuse of using $A$ to denote the additive group of $A$.

\subsection{Valued fields, ordered fields and convex valuations}

\noindent A \textit{valued field} is a field $K$ with a \textit{valuation ring} $\O$, that is a necessarily local subring such that for any $x\in K^{\times}$ we have $x\in \O$ or $x^{-1}\in \O$. The natural group homomorphism $v:K^{\times}\rightarrow K^{\times}/\O^{\times}$ is the corresponding \textit{valuation} on $K$, and $v(K^\times)$ is called its \textit{value group}. This becomes an ordered group under the definition: $[x]\leq [y]$ if and only if $y/x\in \O$. The ring $\O$ may be recovered from the valuation map, and $v$ may be characterized, up to isomorphism, as a surjective homomorphism from $K^{\times}$ into an ordered abelian group $G$ satisfying $v(x+y)\geq \min\{v(x),v(y)\}$ for $x+y\neq 0$. We denote $\oo$ the unique maximal ideal of $\O$, and $\res(K)$ the \textit{residue field}, the image of the natural quotient map $\res:\O\rightarrow \O / \oo$. For detailed discussion of this concepts see \cite{End, Eng, main}.

Notice that if $\O\subseteq \O'$ are valuation rings in $K$, then $\oo'\subseteq \oo$. On the other hand, if $(K,\O)\subseteq (K',\O')$ \footnote{We use $\subseteq$ also to denote the substructure relation.} are valued fields then $\O=K\cap \O'$, $\O^{\times}=K^{\times}\cap \O'^{\times}$ and $\oo=K\cap \oo'$. Thus, up to isomorphism, $v(K^{\times})\subseteq v'(K'^{\times})$, $\res(K)\subseteq \res'(K')$, and the maps $v$ and $\res$ become the restrictions of $v'$ and $\res'$, respectively.

Given an ordered field $K$, any convex subring $\mathcal{O}$ of $K$ is a valuation ring of $K$ whose induced valuation $v$ satisfies 
\begin{equation*}
x\leq y\rightarrow v(y)\leq v(x),  \text{   for all } x,y\in K^{>0}.
\end{equation*} 

\noindent
Reciprocally, a valuation ring with a valuation map satisfying the above inequality is convex and we say that the valuation is \emph{convex}. Thus, we define an \textit{ordered valued field} as an ordered field with a convex\
subring; equivalently, with a convex valuation. 

From \cite[Lemma 3.4, Theorem 3.8]{Kuh4} we have the following result on the structures
of ordered valued fields:

\begin{Lema}[\textbf{Lexicographic decompositions of }$K$]
\label{decomp1} Let $(K,\O)$ be an ordered valued field. Then the underlying additive group $K$ decomposes as $K=A\oplus A^{\prime }\oplus \oo,$ where $A$ is a group complement \footnote{We say that $M$ is a \textit{group complement} of $N$ in the field $K$ if $M$
is a subgroup of $K$ such that $K=M\oplus N$.} to $\mathcal{O}$ in $K$
and $A^{\prime }$ is a group complement to $\oo$ in $\mathcal{O}$ such that $A^{\prime }$ is isomorphic to $\res(K)$.

Additionally, if $(\mathcal{O}^{\times})^{>0}$ denotes the multiplicative group of positive units of $\O$, $1+\oo=\{1+\epsilon :\epsilon\in \oo\}$, and the multiplicative group $K^{>0}$ is divisible, then $%
K^{>0}=B\cdot B^{\prime }\cdot (1+ \oo),$ where $%
B $ is a group complement to $(\mathcal{O}^{\times})^{>0}$ in $K^{>0}$
and $B^{\prime }$ is a group complement to $1+ \oo$ in $(\mathcal{O}^{\times})^{>0}$ such that $B^{\prime }$ is isomorphic to $\res(K)^{>0}$.
\end{Lema}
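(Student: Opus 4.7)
The plan is to obtain both decompositions by identifying subgroups that admit complements because they are ``divisible'' in the appropriate sense: $\mathbb{Q}$-vector subspaces in the additive case, divisible abelian groups in the multiplicative one. In both cases the complement is then produced by a standard splitting argument, and the isomorphism with $\res(K)$ (resp.\ $\res(K)^{>0}$) falls out from the fact that the relevant complement meets the kernel of the residue map trivially and surjects onto the quotient.

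For the additive part, I would first note that any ordered field has characteristic $0$, and since $\O$ is convex with $1 \in \O$, for every $a \in \O$ and positive integer $n$ one has $|a/n|\leq |a|$, so $a/n \in \O$; the same argument applied with $\oo$ in place of $\O$ shows $\oo$ is closed under division by positive integers. Hence $\O$ and $\oo$ are $\mathbb{Q}$-vector subspaces of $K$, and each admits a $\mathbb{Q}$-linear complement. Choosing a complement $A$ of $\O$ in $K$ and a complement $A'$ of $\oo$ in $\O$ yields $K = A \oplus A' \oplus \oo$, and the restriction of the residue map to $A'$ has kernel $A'\cap \oo = 0$ and image $\res(A' + \oo) = \res(\O) = \res(K)$, so $A' \cong \res(K)$ as additive groups.

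For the multiplicative part, assuming $K^{>0}$ divisible, I would verify that $(\O^\times)^{>0}$ and $1+\oo$ are divisible subgroups of $K^{>0}$ and invoke the fact that divisible subgroups of abelian groups are direct summands. Given $x \in (\O^\times)^{>0}$ and $n \geq 1$, the positive $n$-th root $y = x^{1/n} \in K^{>0}$ satisfies $\min\{1,x\} \leq y \leq \max\{1,x\}$, both in $(\O^\times)^{>0}$; by convexity, $y \in (\O^\times)^{>0}$. For $1+\oo$, given $1+\epsilon$ with $\epsilon \in \oo$ and setting $y = (1+\epsilon)^{1/n}$, one has
\[
y - 1 \;=\; \frac{\epsilon}{y^{n-1} + y^{n-2} + \cdots + 1},
\]
whose denominator is a sum of positive elements of $\O$ bounded below away from $0$ (since $y$ is close to $1$), hence a unit of $\O$; thus $y - 1 \in \oo$, and $1+\oo$ is divisible. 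Picking a complement $B$ of $(\O^\times)^{>0}$ in $K^{>0}$ and a complement $B'$ of $1+\oo$ in $(\O^\times)^{>0}$ gives $K^{>0} = B\cdot B'\cdot(1+\oo)$, while the residue map induces $B' \cong (\O^\times)^{>0}/(1+\oo) \cong \res(K)^{>0}$. The main technical obstacle is the divisibility of $1+\oo$: one must guarantee that $n$-th roots of principal infinitesimal units remain principal infinitesimal units, which is precisely what the displayed identity buys us once the denominator is shown to be a unit of $\O$. Everything else reduces to two routine applications of splitting lemmas for $\mathbb{Q}$-modules and divisible abelian groups.
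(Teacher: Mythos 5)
The paper does not prove this lemma; it is quoted directly from \cite[Lemma 3.4, Theorem 3.8]{Kuh4}, and your argument is the standard one given there: $\mathcal{O}$ and $\oo$ are $\mathbb{Q}$-subspaces of $K$ (hence direct summands), while $(\mathcal{O}^{\times})^{>0}$ and $1+\oo$ are divisible subgroups of the divisible group $K^{>0}$ (hence direct summands), with the residue map identifying $A'$ with $\res(K)$ and $B'$ with $\res(K)^{>0}$. Your proof is correct, including the one genuinely non-routine point — that $n$-th roots of elements of $1+\oo$ stay in $1+\oo$ — which you could even shortcut by noting that $\min\{1,1+\epsilon\}\leq y\leq\max\{1,1+\epsilon\}$ already forces $|y-1|\leq|\epsilon|$, so $y-1\in\oo$ by convexity of $\oo$.
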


\bmhead{The natural valuation on an ordered field }

The natural valuation $w$ of an ordered field $K$ is the one induced the convex hull of $\mathbb{Q}$ in $K$ that we will denote $O_w$. Its value group may be identified with the archimedean classes of $K$ and its maximal ideal $\oo_w$ consists of the infinitesimals of $K$. The following are useful properties of this valuation.

\begin{Lema}\label{natural valuation} Let $F\subseteq K$ be ordered fields with $F$ real closed, $x\in K\setminus F$ and $F(x)$ the simple field extension induced by $x$, then:
\begin{enumerate}
    \item[$(1)$] \emph{(\cite[Lemma 3.4]{Van1})} If $w(F(x)^{\times
})\not=w(F^{\times })$, there is $a\in F$ such that $w(x-a)\not\in w(F^{\times }).$

    \item[$(2)$] \emph{(\cite[Corollary 2.2.3]{Eng})} If $w(x)\not\in w(F^{\times }),$ then $w(F(x)^{\times })=w(F^{\times })\oplus \mathbb{Z}w(x)$.
\end{enumerate}
\end{Lema}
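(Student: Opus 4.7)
For part (1), the strategy is to pick any witness $y\in F(x)^{\times }$ with $w(y)\notin w(F^{\times })$, write $y=p(x)/q(x)$ with $p,q\in F[X]$, and reduce to the case where $w(p(x))\notin w(F^{\times })$. Since $F$ is real closed, I factor $p$ as $c\prod_i(X-a_i)\prod_j((X-b_j)^2+c_j^2)$ with $c,a_i,b_j,c_j\in F$ and $c_j\neq 0$. The crucial identity, valid for the natural (convex) valuation, is that $w(u+v)=\min(w(u),w(v))$ for any positive $u,v$, since $\max(u,v)\le u+v\le 2\max(u,v)$ keeps $u+v$ in the archimedean class of $\max(u,v)$. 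Applied to each irreducible quadratic factor, this yields $w((x-b_j)^2+c_j^2)=\min(2w(x-b_j),2w(c_j))$, which is either in $w(F^{\times })$ (when $w(c_j)\le w(x-b_j)$) or equals $2w(x-b_j)$ (when $w(x-b_j)<w(c_j)$). Summing, $w(p(x))-w(c)$ is a $\mathbb{Z}$-combination of various $w(x-a_i)$ and $w(x-b_j)$ modulo $w(F^{\times })$. Since the total lies outside the subgroup $w(F^{\times })$, at least one summand $w(x-a)$ must also lie outside $w(F^{\times })$, giving the required $a\in F$.

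For part (2), the plan is first to upgrade the hypothesis $w(x)\notin w(F^{\times })$ to the stronger statement that $w(x)$ has infinite order modulo $w(F^{\times })$, and then to apply the strict triangle inequality term-by-term. Since $F$ is real closed, its natural value group $w(F^{\times })$ is divisible; if one had $nw(x)=w(f)\in w(F^{\times })$ for some $n\ge 1$, divisibility would produce $g\in F^{\times}$ with $nw(g)=w(f)$, whence $n(w(x)-w(g))=0$, and therefore $w(x)=w(g)\in w(F^{\times })$ by torsion-freeness of ordered value groups, contradicting the hypothesis. With infinite order in hand, for any $p(X)=\sum c_iX^i\in F[X]$ the quantities $w(c_i)+iw(x)$ for the nonzero coefficients are pairwise distinct, so the strict triangle inequality gives $w(p(x))=\min_i(w(c_i)+iw(x))\in w(F^{\times })\oplus\mathbb{Z}w(x)$. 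Since every nonzero element of $F(x)$ is a quotient of such polynomial values and the reverse inclusion is trivial, this yields $w(F(x)^{\times })=w(F^{\times })\oplus\mathbb{Z}w(x)$.

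The main obstacle I expect is the bookkeeping with the quadratic factors in (1). Without the positivity-based identity $w(u+v)=\min(w(u),w(v))$, one cannot control the valuation of a sum of two squares, and it is precisely this step that makes essential use of $w$ being the natural (convex) valuation rather than an abstract valuation. Once that identity is applied the remaining arithmetic is routine, and part (2) becomes essentially formal once the order-computation is settled.
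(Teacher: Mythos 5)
Your proof is correct, and since the paper offers no proof of this lemma — it only cites \cite[Lemma 3.4]{Van1} for (1) and \cite[Corollary 2.2.3]{Eng} for (2) — the relevant comparison is with those sources, whose arguments you have essentially reconstructed: the factorization into linear and quadratic factors over the real closed field together with $w(u+v)=\min(w(u),w(v))$ for $u,v>0$ is exactly the van den Dries–Macintyre–Marker argument, and the reduction of $w(x)\notin w(F^{\times})$ to infinite order modulo the divisible group $w(F^{\times})$ followed by the strict triangle inequality is the standard route to (2). Nothing further is needed.
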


\subsection{The theories $T_{an}$ and $T_{an}(\exp)$}

\noindent Let $L_{an}$ be the language of ordered rings $\{<,0,1,+,-,\cdot
\} $ augmented by a new function symbol for each restricted analytic
function $f_{I}:\mathbb{R}^{n}\rightarrow \mathbb{R}$:
\begin{equation*}
f_{I}(X)=
\begin{cases}
f(X), & \text{if }X\in I^{n} \\ 
0, & \text{if }X\notin I^{n},
\end{cases}
\end{equation*}

\noindent where $f$ belongs to $\mathbb{R}\{X_{1},...,X_{n}\}$, the ring of
all real power series in $X_{1},..,X_{n}$ converging in a neighbourhood of
the real unit $n$-cube $I^{n},$ $I=[-1,1].$ Then $T_{an}=Th(\mathbb{R}_{an})$
, where $\mathbb{R}_{an}$ is the ordered field of real numbers with its
natural $L_{an}$-structure. By \cite{Gabrielov}, \cite{Van9}, \cite{Van8} we
know that $T_{an}$ is model complete and o-minimal \footnote{%
We recall that a theory in which an order is given or definable is called 
\textit{o-minimal} if in every model of this theory, each definable subset
is a finite union of points and intervals (cf. \cite{P-S})}. It follows that $
\mathbb{R}_{an}$ is the prime model of this theory. Moreover, expanding $\mathbb{R}_{an}$ with the multiplicative inverse $x\mapsto \dfrac{1}{x}$ for $x\not=0$ and roots $\sqrt[n]{}$ of non-negative elements for $n>1$ (defined 0 otherwise), then $T_{an\ast }=Th(\mathbb{R}_{an},^{-1},\sqrt[n]{})$ has quantifier elimination and a universal axiomatization. \\

\noindent
\textbf{Example 1.}
Consider the \textit{Hahn field} $\mathbb{R}((t^{\Gamma }))$, that is the ordered field of the generalized power series $\sum\limits_{\gamma\in \Gamma}a_{\gamma}t^{\gamma}$ with coefficients $a_{\gamma}$ in $\mathbb{R}$, exponents $\gamma$ in an ordered abelian group $\Gamma$ and well-ordered support $\{\gamma\in \Gamma:a_{\gamma}\neq 0\}$. From \cite{Van1} we know that for any divisible ordered abelian group 
$\Gamma $, $\mathbb{R}((t^{\Gamma }))$ has a natural expansion to a model of $T_{an}$. By model completeness, the same is true of the union of any chain of such fields.\\

The following is a further important property of the natural valuation $w$
in models of $T_{an}$. Assume $F\subseteq K$ are models of $T_{an}$, $x\in K\setminus F,$ and $F\langle x\rangle $ is the
definable closure of $F(x)$ in $K$, which by model completeness is the model of $%
T_{an} $ generated by $F(x)$. Then

\begin{Lema} \label{divhull}
\emph{(\cite[Corollary 3.7]{Van1})} If $x\not\in F$ then $w(F\langle x\rangle ^{\times })$ is the divisible
hull of $w(F(x)^{\times }).$
\end{Lema}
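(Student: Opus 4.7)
The plan is to prove the equality $w(F\langle x\rangle^{\times}) = \Delta$, where $\Delta$ denotes the divisible hull of $w(F(x)^{\times})$, by establishing the two inclusions separately. Since every model of $T_{an}$ is real closed, $F\langle x\rangle$ has a divisible value group; for each $a \in F(x)^{\times}$ and $n \geq 1$ the element $|a|^{1/n}$ lies in $F\langle x\rangle$ with valuation $w(a)/n$, so $\Delta \subseteq w(F\langle x\rangle^{\times})$ is immediate. For the reverse inclusion, note that $\Delta$ coincides with $w((F(x)^{rc})^{\times})$, where $F(x)^{rc}$ denotes the real closure of $F(x)$ inside $K$; the task thus reduces to showing that closing $F(x)^{rc}$ under the restricted analytic operations of $L_{an}$ to obtain $F\langle x\rangle$ adds no new classes to the value group.

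Suppose for contradiction that some $y \in F\langle x\rangle$ satisfies $w(y) \notin \Delta$. Then $y \notin F(x)^{rc}$, and Lemma~\ref{natural valuation}(1) applied to the extension $F(x)^{rc} \subseteq F\langle x\rangle$ produces $b \in F(x)^{rc}$ with $w(y-b) \notin \Delta$; Lemma~\ref{natural valuation}(2) then gives $w(F(x)^{rc}(y-b)^{\times}) = \Delta \oplus \mathbb{Z}w(y-b)$ inside $w(F\langle x\rangle^{\times})$. It is convenient to also normalize $x$ by the same lemma: either $w(F(x)^{\times}) = w(F^{\times})$, in which case $\Delta = w(F^{\times})$, or, after a translation $x \mapsto x-a$ with $a \in F$, one has $w(x) \notin w(F^{\times})$ and $\Delta = w(F^{\times}) \oplus \mathbb{Q}w(x)$. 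The desired contradiction must then come from ruling out the extra $\mathbb{Z}$-independent generator $w(y-b)$ above $\Delta$.

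The heart of the argument, and its main obstacle, is precisely this last step. The guiding principle is a Weierstrass-preparation analysis: for any convergent series $f \in \mathbb{R}\{X_1,\dots,X_n\}$ and any tuple $\bar c$ of elements of $F(x)^{rc}$ lying in the valuation ring of $F\langle x\rangle$, one expects a factorization $f_{I}(\bar c) = p(\bar c)\cdot u$, where $p$ is a polynomial over $F$ obtained by preparing $f$ around the residue of $\bar c$, and $u$ is a unit of the valuation ring of $F\langle x\rangle$ with nonzero residue, hence $w(u) = 0$. Since $w(p(\bar c)) \in w((F(x)^{rc})^{\times}) = \Delta$, one obtains $w(f_{I}(\bar c)) \in \Delta$, and iterating this over the construction of $F\langle x\rangle$ by $L_{an}$-terms yields the inclusion $w(F\langle x\rangle^{\times}) \subseteq \Delta$. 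The delicate point is to handle arguments falling outside the box $I^n$ (where $f_{I}$ is identically $0$ and the preparation is trivial) and the "boundary" cases where the residue of $\bar c$ sits at a zero of $f$ along which the preparation must be repeated; making the case distinction uniform, typically by invoking the o-minimal cell decomposition for $T_{an}$, is what makes this step substantive.
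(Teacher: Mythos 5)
First, a point of comparison: the paper does not prove this lemma at all --- it is imported verbatim from \cite[Corollary 3.7]{Van1} --- so there is no in-paper argument to match your proposal against; it has to stand on its own.

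Judged on its own, there is a genuine gap, and you have in fact located it yourself. Your first inclusion ($\Delta\subseteq w(F\langle x\rangle^{\times})$ via $n$-th roots) and the reduction to the real closure $F(x)^{rc}$ (whose value group is the divisible hull $\Delta$) are fine. But the entire content of the lemma is the reverse inclusion, and your treatment of it is written in the subjunctive: ``one expects a factorization $f_I(\bar c)=p(\bar c)\cdot u$ with $u$ a unit of valuation zero.'' That expectation \emph{is} the theorem. It amounts to the valuation property for $T_{an}$, equivalently to the fact that $T_{an}$ is polynomially bounded with field of exponents $\mathbb{Q}$ in a local, prepared form: every $F$-definable unary germ looks like $c(t-a)^{q}u(t)$ with $q\in\mathbb{Q}$ and $w(u)=0$. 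In \cite{Van1} this rests on the Weierstrass-preparation analysis of $\mathbb{R}\{X_1,\dots,X_n\}$ together with the quantifier elimination of \cite{Van9} (one also needs the latter, or the term-generation of $F\langle x\rangle$ in the language with $^{-1}$ and $\sqrt[n]{\ }$, to justify your ``iterate over $L_{an}$-terms'' step: the definable closure is not generated by $L_{an}$-terms alone). The concrete difficulty your sketch does not address is cancellation: writing $\bar c=\bar r+\bar\epsilon$ with $\bar r$ real and $\bar\epsilon$ infinitesimal, the Taylor terms $\frac{f^{(\alpha)}(\bar r)}{\alpha!}\bar\epsilon^{\alpha}$ all have values in $\Delta^{\geq 0}$, but several of them can share the minimal value and cancel, so one cannot simply read off $w(f_I(\bar c))$ as the value of a dominant monomial; eliminating this requires the preparation/change-of-variables machinery, not a case distinction on cells. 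Finally, the detour in your second paragraph through Lemma~\ref{natural valuation} produces the element $y-b$ with $w(y-b)\notin\Delta$ but then plays no role in deriving the contradiction, so it can be dropped; the contradiction has to come entirely from the (missing) preparation step.
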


Combining this with Lemma \ref{natural valuation} and because $%
w(F^{\times })$ is already divisible we have:

\begin{Coro}
\label{divisible hull} If $%
w(x)\not\in w(F^{\times })$ then $w(F\langle x\rangle ^{\times })$ $%
=w(F^{\times })\oplus \mathbb{Q}w(x).$
\end{Coro}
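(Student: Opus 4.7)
The plan is to combine the two preceding lemmas and exploit the fact that $F$, being a model of $T_{an}$, is in particular real closed, so its natural value group $w(F^\times)$ is already divisible.

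First, I would apply Lemma \ref{natural valuation}(2) directly with the natural valuation $w$ in place of the generic $w$ of its statement. The hypothesis $w(x)\notin w(F^\times)$ is exactly what is required, and $F$ is real closed, so the lemma yields
\[
w(F(x)^\times) = w(F^\times)\oplus \mathbb{Z}\,w(x).
\]
Next I would invoke Lemma \ref{divhull} to identify $w(F\langle x\rangle^\times)$ with the divisible hull of the group on the right-hand side above.

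The remaining step is purely group-theoretic: compute the divisible hull of $w(F^\times)\oplus \mathbb{Z}\,w(x)$ inside the ambient torsion-free ordered abelian group $w(K^\times)$. Since $F$ is real closed, every positive element of $F$ has $n$-th roots in $F$ for each $n\geq 1$, hence $w(F^\times)$ is already divisible. The divisible hull of a direct sum $G\oplus H$ inside a torsion-free abelian group is the direct sum of the divisible hulls of the summands (computed in the ambient group); applying this with $G = w(F^\times)$ (already divisible) and $H=\mathbb{Z}\,w(x)$ (whose divisible hull is $\mathbb{Q}\,w(x)$) gives
\[
w(F\langle x\rangle^\times) = w(F^\times)\oplus \mathbb{Q}\,w(x),
\]
which is the desired conclusion.

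There is no real obstacle here; the only point that deserves a short justification is the divisibility of $w(F^\times)$ and the directness of the resulting sum, both of which follow from the real-closedness of $F$ and from the fact that the directness of $w(F^\times)\oplus\mathbb{Z}w(x)$ already implies the directness of $w(F^\times)\oplus\mathbb{Q}w(x)$ inside the torsion-free group $w(K^\times)$ (no nonzero rational multiple of $w(x)$ can land in the divisible group $w(F^\times)$, else $w(x)$ itself would already belong to $w(F^\times)$, contradicting the hypothesis).
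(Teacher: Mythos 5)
Your proposal is correct and follows exactly the route the paper intends: Lemma \ref{natural valuation}(2) gives $w(F(x)^{\times})=w(F^{\times})\oplus\mathbb{Z}w(x)$, Lemma \ref{divhull} identifies $w(F\langle x\rangle^{\times})$ with its divisible hull, and the divisibility of $w(F^{\times})$ (from real closedness of the $T_{an}$-model $F$) yields the stated direct sum. You have merely spelled out the group-theoretic step that the paper leaves implicit.
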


Let $L_{an,\exp }$ be the language $L_{an}$ augmented with a unary function
symbol $\exp .$ From \cite{Van1}, we know that the theory $T_{an}(\exp )=Th(%
\mathbb{R}_{an},e^{x})$ is model-complete and o-minimal with prime model $(%
\mathbb{R}_{an},e^{x}).$ Moreover, the theory $T_{an}(\exp ,\log)$ obtained by extending $T_{an}(\exp )$ with the defining axiom for the inverse $\log$ of $\exp$, $(x>0\rightarrow \exp (\log x)=x)\wedge (x\leq 0\rightarrow \log%
x=0),$ has quantifier elimination and a
universal axiomatization.\\

\noindent
\textbf{Example 2.}
The field of \textit{logarithmic-exponential transseries} $\mathbb{T}$ whose formal definition can be found
in \cite[Section 2]{Van2} or in \cite[Appendix A]{Van6}, yields an explicit
non-archimedean model of $T_{an}(\exp )$. As noticed in the introduction, the subfield $\mathbb{T}_{\log}$ of \textit{logarithmic transseries}; that is, transseries without exponential terms, is also a model of $T_{an}$ but not of $T_{an}(\exp )$.\\

\subsection{The theory $T_{convex}$}

Let $L$ be a language extending the language of ordered rings, $T$ a
complete o-minimal $L$-theory extending $Th(\mathbb{R},+,\times,0,1,<)$,
and $K$ a model of $T$. Following \cite{Van10}, we say that a convex subring $\O$ of $K$ is $T$-\textit{convex} if $f(\mathcal{O}%
)\subseteq \mathcal{O}$ for each 0-definable continuous \footnote{%
The notion of continuity is with respect to the order topology of $K$.}
function $f:K\rightarrow K$.

$T_{convex}$ denotes the theory of the pairs $(K,\mathcal{O})$ with $K$ a
model of $T$ and $\mathcal{O}$ a proper $T$-convex subring of $K$. As main result about the model theory of $T_{convex}$ we have:

\begin{Propo}
\emph{\label{model complete}\cite[Theorem 3.10, Corollaries 3.13 and 3.14]%
{Van10}} $T_{convex}$ is complete and weakly o-minimal \footnote{%
That is, in every model of this theory, each definable subset is a finite
union of convex subsets (cf. \cite{Dickmann}).}. Moreover:
\begin{enumerate}

\item[$(1)$] If $T$ is model complete, then $T_{convex}$ is model complete.

\item[$(2)$] If $T$ has quantifier elimination and is universally
axiomatizable, then $T_{convex}$ has quantifier elimination.
\end{enumerate}
\end{Propo}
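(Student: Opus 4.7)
The plan is to follow a residue-field strategy in the spirit of Ax--Kochen--Ershov, specialized to the $T$-convex setting. The critical preliminary step is a residue-field lemma: for any $(K,\O)\models T_{convex}$, the residue field $\res(K)$ carries a canonical $L$-structure under which it is a model of $T$. The idea is that $T$-convexity together with o-minimality of $T$ forces every $0$-definable continuous function $f:K\to K$ to preserve not only $\O$ but also the maximal ideal $\oo$; one uses o-minimality to decompose $f$ into finitely many monotone or constant pieces and checks that no such piece can send an infinitesimal to a non-infinitesimal. Hence $f$ descends to a well-defined function on $\res(K)$, and transferring the axioms of $T$ down follows from o-minimality in $K$.

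Granted this, completeness of $T_{convex}$ reduces to a comparison via residue fields: in any two models the residue fields are models of the complete theory $T$ and so elementarily equivalent, and a back-and-forth matching residue fields, value groups, and pseudo-Cauchy behaviour yields $(K_1,\O_1)\equiv(K_2,\O_2)$. For model completeness assuming $T$ is model complete, the main step is a single-element extension: given $(K,\O)\subseteq(K',\O')$ both models of $T_{convex}$ and $a\in K'\setminus K$, I would split into three cases depending on whether $\res(a)\notin\res(K)$, $v(a)\notin v(K^\times)$, or $a$ is immediate over $K$. In the first two cases $a$ is freely adjoined using model completeness of $T$ and the induced convex valuation is identified; the immediate case, which I expect to be the main obstacle, requires a pseudo-Cauchy / spherical-completion argument to realize $a$ inside a sufficiently saturated elementary extension of $(K,\O)$.

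For quantifier elimination under the stronger hypothesis that $T$ has QE and is universally axiomatizable, I would combine the model completeness already established with the universal axiomatization through the standard substructure-embedding criterion: universal axioms guarantee that substructures of models carry enough of $T$ to apply the single-element extension analysis, which supplies the embedding extension. Finally, weak o-minimality is obtained by induction on formula complexity, showing that any parameter-definable subset of $K$ in one free variable is a finite union of convex sets; atomic $L$-predicates are handled by o-minimality of $T$, the predicate $x\in\O$ itself defines a convex set, boolean combinations preserve finite unions of convex sets automatically in an ordered set, and the quantifier step --- the true technical heart --- requires showing that definable families of convex subsets project to finite unions of convex sets, which one expects to derive from the model completeness and residue-field analysis already in place.
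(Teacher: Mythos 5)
First, note that the paper does not prove this Proposition: it is quoted from van den Dries and Lewenberg \cite{Van10} (Theorem 3.10 and Corollaries 3.13 and 3.14), so there is no in-paper argument to compare yours against; what follows measures your outline against that source. Your ``critical preliminary step'' already contains an error: it is false that every $0$-definable continuous $f$ with $f(\O)\subseteq\O$ preserves $\oo$ (take $f(x)=x+1$, which sends infinitesimals to units). What one actually needs is that $x-y\in\oo$ implies $f(x)-f(y)\in\oo$ for $x,y\in\O$, and the route in \cite{Van10} is structural rather than pointwise: $\O$ is the convex hull of a maximal elementary substructure $K'\preceq K$ contained in $\O$, and $\res|_{K'}$ transports the $L$-structure to $\res(K)$. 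Worse for your plan, this structure on $\res(K)$ is canonical only when $T$ is power bounded (which is exactly why the paper's Lemma \ref{tconvex1} invokes polynomial boundedness of $T_{an}$); for a general o-minimal $T$ as in the Proposition (e.g.\ $T_{an}(\exp)$), the induced residue structure depends on the choice of $K'$, so an Ax--Kochen--Ershov back-and-forth keyed to ``the'' residue field as an invariant does not get off the ground as stated. Completeness in \cite{Van10} is instead derived from the embedding theorem together with a common prime substructure, much as the present paper derives Theorem \ref{complete} from Lemma \ref{lema18}.

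Second, your treatment of weak o-minimality is the step that genuinely fails as described. An induction on formula complexity whose quantifier step asks that projections of definable families of convex sets be finite unions of convex sets is circular: carrying out that step in general is tantamount to proving quantifier elimination, and there is no direct argument for it from model completeness alone. The actual derivation runs in the opposite direction: one first passes to a definitional expansion $T^{*}$ of $T$ with quantifier elimination and a universal axiomatization (available because o-minimal expansions of ordered fields have definable Skolem functions), applies part $(2)$ to get quantifier elimination for $T^{*}_{convex}$, and then observes that every atomic formula in one variable defines either a finite union of intervals (by o-minimality of $T^{*}$) or a set of the form $t(x)\in\O$, which is a finite union of convex sets because $t$ is piecewise continuous and monotone and preimages of convex sets under monotone continuous maps on intervals are convex. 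Your skeleton for model completeness (splitting a one-element extension into residue, value-group, and immediate cases) has the right shape and mirrors the paper's own Proposition \ref{mainextension}, but the immediate case --- which you correctly identify as the main obstacle --- is left entirely to ``a pseudo-Cauchy / spherical-completion argument,'' and that is precisely where all of the content of \cite[Theorem 3.10]{Van10} lives.
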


In particular, $(T_{an})_{convex}$ is complete, model complete and weakly
o-minimal. In addition, $(T_{an\ast })_{convex}$ has quantifier elimination.
Further, as $T_{an}$ is polynomially bounded \footnote{%
A theory of ordered fields is polynomially bounded if for each model $K$ and
each $K$-definable function $f:K\rightarrow K$ there is $n\in \mathbb{N}$
and $a\in K^{>0}$ such that for all $r>a$ we have $|f(r)|<r^{n}$.} as shown
in \cite{Van8}, then by \cite[Proposition 4.2 and Remark 2.16]{Van10} we have:

\begin{Lema}
\label{tconvex1}For every proper convex subring $\mathcal{O}$ of a model $K$
of $T_{an}$, $(K,\mathcal{O})$ is a model of $(T_{an})_{convex}.$ Thus, $\O$ is closed under restricted analytic functions, and $\res(K)$ can be made into a model of $T_{an}$.
\end{Lema}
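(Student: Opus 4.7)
The plan is to verify directly the defining property of $T_{an}$-convexity, namely that $f(\mathcal{O})\subseteq\mathcal{O}$ for every $0$-definable continuous function $f\colon K\to K$ in $T_{an}$. The key preliminary observation is that any proper convex unital subring $\mathcal{O}$ of $K$ contains the prime submodel $\mathbb{R}\hookrightarrow K$: indeed, $\mathcal{O}$ contains $1$, hence $\mathbb{Q}$, hence (by convexity) the convex hull $O_{w}$ of $\mathbb{Q}$ in $K$, and $\mathbb{R}$ sits inside $O_{w}$. This ensures that any bounding constant taken from the real prime model will be automatically available inside $\mathcal{O}$.

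I would then split on the size of $r\in\mathcal{O}$. By polynomial boundedness of $T_{an}$ (cited from \cite{Van8}), applied to $f$ in the prime model $\mathbb{R}_{an}$, there exist $n\in\mathbb{N}$ and $a\in\mathbb{R}^{>0}$ with $|f(r)|\leq |r|^{n}$ for $|r|>a$; by model completeness of $T_{an}$ this first-order statement transfers to $K$. For such $r$, the element $|r|^{n}$ belongs to the ring $\mathcal{O}$, so convexity forces $f(r)\in\mathcal{O}$. For $r\in\mathcal{O}$ with $|r|\leq a$, o-minimality together with continuity of $f$ on $[-a,a]_{\mathbb{R}}$ in $\mathbb{R}_{an}$ yields a real bound $|f|\leq M$ on this interval, and the same transfer gives $|f(r)|\leq M$ for every $r\in[-a,a]_{K}$. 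Since $M\in\mathbb{R}\subseteq\mathcal{O}$, convexity again places $f(r)$ in $\mathcal{O}$. Combining the two cases proves $(K,\mathcal{O})\models (T_{an})_{convex}$.

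The remaining conclusions now follow readily: each restricted analytic symbol $f_{I}$ is $0$-definable and continuous in $T_{an}$, so $\mathcal{O}$ is closed under $f_{I}$; the compatibility of the residue map with the $f_{I}$ (a direct consequence of $T_{an}$-convexity combined with the continuity of these specific symbols on $\mathcal{O}$) induces a well-defined $L_{an}$-structure on $\res(K)$ modelling $T_{an}$, as in \cite[Remark 2.16]{Van10}. The main obstacle I anticipate is the small-argument case: polynomial boundedness is silent there, and the rescue requires both o-minimality and model completeness to pin the bounding constant $M$ inside $\mathbb{R}$, and thus inside $\mathcal{O}$. This is precisely the point at which polynomial boundedness, a strictly stronger hypothesis than mere o-minimality, becomes the sharp ingredient.
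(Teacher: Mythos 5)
Your proof is correct and follows essentially the same route as the paper, which simply cites \cite[Proposition 4.2 and Remark 2.16]{Van10} together with the polynomial boundedness of $T_{an}$ from \cite{Van8}; your argument is precisely the standard proof of that cited proposition, splitting $r\in\mathcal{O}$ into the large case (handled by polynomial boundedness and the ring structure of $\mathcal{O}$) and the bounded case (handled by a real bound $M\in\mathbb{R}\subseteq\mathcal{O}$ obtained by transfer from the prime model). The only cosmetic point is that polynomial boundedness as stated bounds $f(r)$ only for $r>a$, so negative arguments require applying it also to $r\mapsto f(-r)$, which is immediate.
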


The following observation will be very useful in the proof of the main theorem. \medskip

\begin{Lema}\label{Lemma A} Let $(H,V)\subseteq (H\langle y\rangle,\O_1)$, $(H\langle y\rangle,\O_2)$ be models of $T_{convex}$ with $\O_1\subseteq \O_2$,  $y\in \O_1^{\times }$ and $w(y)\not\in w(H^{\times })$ for the natural valuation in $H\langle x\rangle$, then $\O_1=\O_2.$
\end{Lema}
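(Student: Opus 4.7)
The plan is to recast the problem as one about convex subgroups of the value group of the natural valuation $w$ on $H\langle y\rangle$. First, since $T_{an}$ is polynomially bounded and $V, \O_1, \O_2$ are all $T$-convex subrings (by Lemma \ref{tconvex1}) containing $\mathbb{Z}$, convexity forces them to contain the natural valuation rings of $H$ and $H\langle y\rangle$ respectively. In particular $\O_1, \O_2$ are coarsenings of the natural valuation ring of $H\langle y\rangle$, so each corresponds to a convex subgroup $\Delta_i := w(\O_i^\times)$ of $\Gamma := w(H\langle y\rangle^\times)$, with $\Delta_1 \subseteq \Delta_2$, and showing $\Delta_1 = \Delta_2$ will give $\O_1 = \O_2$.

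Next, I would collect the ingredients. Since $V = H\cap \O_i$ for $i=1,2$, intersecting with $w(H^\times)$ yields a common convex subgroup $\Delta_V := w(V^\times) = w(H^\times)\cap \Delta_1 = w(H^\times)\cap \Delta_2$. The hypothesis $y\in \O_1^\times$ becomes $w(y)\in \Delta_1$, and a short calculation (writing $q=m/n$ and using $|qw(y)|\leq |mw(y)|$ with $mw(y)\in\Delta_1$) yields $\mathbb{Q}w(y)\subseteq \Delta_1$ by convexity. Finally, the hypothesis $w(y)\not\in w(H^\times)$ forces $y\not\in H$, so Corollary \ref{divisible hull} applies and gives $\Gamma = w(H^\times)\oplus \mathbb{Q}w(y)$.

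The conclusion is then immediate: given any $\delta \in \Delta_2$, decompose $\delta = h + qw(y)$ with $h\in w(H^\times)$ and $q\in \mathbb{Q}$. Then $qw(y)\in \Delta_1 \subseteq \Delta_2$, hence $h = \delta - qw(y) \in \Delta_2 \cap w(H^\times) = \Delta_V \subseteq \Delta_1$, so $\delta \in \Delta_1$. Thus $\Delta_2\subseteq \Delta_1$, giving $\O_1=\O_2$. The only real obstacle is the set-up: identifying the right framework of convex subgroups and establishing that the two subgroups $\Delta_1,\Delta_2$ share the same trace $\Delta_V$ on $w(H^\times)$ and both contain all of $\mathbb{Q}w(y)$; once these observations are in place, the direct-sum decomposition $\Gamma=w(H^\times)\oplus \mathbb{Q}w(y)$ closes the argument in one line.
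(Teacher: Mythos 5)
Your argument is correct, but it follows a genuinely different route from the paper's. You work entirely on the value-group side: since $V$, $\O_1$, $\O_2$ are convex unital subrings, they contain the respective natural valuation rings, hence are coarsenings of $w$ and are determined by the convex subgroups $\Delta_i=w(\O_i^{\times})$ of $w(H\langle y\rangle^{\times})$; the substructure relations give $\Delta_1\cap w(H^{\times})=\Delta_2\cap w(H^{\times})=w(V^{\times})$, the hypothesis $y\in\O_1^{\times}$ gives $\mathbb{Q}w(y)\subseteq\Delta_1$ by convexity, and Corollary \ref{divisible hull} supplies the decomposition $w(H\langle y\rangle^{\times})=w(H^{\times})\oplus\mathbb{Q}w(y)$, which forces $\Delta_2\subseteq\Delta_1$. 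Each of these steps checks out (in particular $w(H^{\times})\cap\Delta_i\subseteq w(V^{\times})$ does hold, because two elements of equal $w$-value differ by a unit of $\O_w\subseteq\O_i$). The paper instead works on the residue side: it identifies the residue fields of $V$ under $\O_1$ and $\O_2$, uses $w(y)\notin w(H^{\times})$ to show that the residue of $y$ falls outside $\res(V)$ for both rings, checks that $y^{\O_1}$ and $y^{\O_2}$ realize the same cut over $\res(V)$ (the crossed cases would contradict that residue maps reflect strict inequalities), and then invokes Lemma 5.2 of van den Dries--Lewenberg, which says that a $T$-convex subring lying over $(H,V)$ is determined by this cut. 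The trade-off: the paper's argument works for an arbitrary o-minimal $T$, whereas yours leans on Corollary \ref{divisible hull} and hence on the polynomial boundedness of $T_{an}$, so it establishes the lemma only for $(T_{an})_{convex}$ (more generally, for power-bounded $T$); since the paper only ever applies the lemma in that setting, nothing is lost, and your proof has the merit of being self-contained elementary valuation theory that avoids the appeal to the external cut lemma.
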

\begin{proof}
Write $X^{\O}=\res_{\O}(X)$ for the image
of $X$ under the residual map with respect to a given local ring ${\O}$. As $V=H\cap \O_1=H\cap \O_2$ we may identify $V^{V}=V^{\O_1}=V^{\O_2}.$ The conditions on $y$ imply $y^{\O_i}\not\in V^{\O_i}$ for $i=1,2$ because $y=h+\varepsilon $ with $h\in V,$ $\varepsilon
\in \oo_{i}$, implies $h\in \O_{i}^{\times}$ and thus $y=h(1+\varepsilon^{\prime})$ with $\varepsilon^{\prime}\in \oo_{i}\subseteq \oo_{w}$, but $1+\varepsilon^{\prime }$ is a unit in $\O_{w}$ which contradicts $w(y)\not\in w(H^{\times }).$ Thus for $h\in V$ we are left with the
situations:\medskip 
\begin{center}

$y^{\O_1}<h^{\O_2},$ $y^{\O_{2}}<h^{\O_{2}}\qquad \qquad y^{\O_{1}}>h^{\O_{1}},$ $y^{\O_{2}}>h^{\O_{2}}$

$y^{\O_{1}}<h^{\O_{1}},$ $y^{\O_{2}}>h^{\O_{2}}\qquad \qquad y^{\O_{1}}>h^{\O_{1}},$ $y^{\O_{2}}<h^{\O_{2}}$
\end{center}
\noindent and the last two are contradictory since $\res$ reflects strict order.
Thus $y^{\O_{1}}$ and $y^{\O_{2}}$ realize the same cut over $V^{V}$ and $\O_{1}=\O_{2}$ by [22, Lemma 5.2].
\end{proof}

\section{The theory $T_{an}(\mathcal{O}$-$\exp )$}

Let $L_{+}=L_{an,\mathcal{O},\exp }$ be the language $L_{an}$ expanded by a
unary relation symbol $\mathcal{O}$ and a unary function symbol $\exp $. From now on, we single out the unary function symbol $\mathbf{e}$ of $L_{an}$ to denote the function corresponding to the analytic exponential $e^x=\sum\limits_{n=0}^{\infty}\dfrac{1}{n!}x^n$ restricted to $[-1,1].$

\begin{Def} \label{def9}
$T_{an}(\mathcal{O}$-$\exp )$ is the $L_{+}$-theory of the structures $(K,%
\mathcal{O},\exp )$ where $K$ is a model of $T_{an}$, $\mathcal{O}$ is a
proper convex subring of $K$ and $\exp :\mathcal{O}\rightarrow \mathcal{O}$
is a function such for all $x,y\in \mathcal{O}$ we have:

\begin{enumerate}
\item[$E1.$] $\exp(x+y)=\exp x\cdot \exp y$;

\item[$E2.$] $\exp x=\mathbf{e}(x)$ if $|x|\leq 1$;

\item[$E3.$] if $x>n^{2}$, then $\exp x>x^{n}$, for $n\geq 0$;

\item[$E4.$] if $y>1$, there is $x\in \mathcal{O}$ such that $\exp x=y$.
\end{enumerate}

\noindent \textit{For convenience we set }$\exp x=0$\textit{\ in }$%
K\setminus \mathcal{O}.$
\end{Def}

It follows from the definitions that $\exp 0=\mathbf{e}(0)=1$, and for any $x\in 
\mathcal{O}$ 
\begin{equation*}
\exp x=\exp (x/2+x/2)=\exp (x/2)^{2}>0.
\end{equation*}
Moreover, $\exp (x-x)=\exp 0=1$ and $\exp (-x)=(\exp x)^{-1}$. By axiom
E3, taking $n=0$, we obtain that if $x>0$ then $\exp x>1$; thus, if $%
x,y\in \mathcal{O}$ with $x<y$, then $\exp x<\exp y$. Hence $\exp $ is
positive and strictly increasing in $\mathcal{O}$. \ Let $y\in (\mathcal{O}%
^{\times })^{>0}$. If $y>1$, by axiom E4, there is $x\in \mathcal{O}$ such
that $\exp x=y$. If $y<1$, then $y^{-1}>1$ and by axiom E4, there is $x\in 
\mathcal{O}$ such that $\exp x=y^{-1};$ equivalently, $\exp (-x)=y$. Thus, 
\begin{equation*}
\exp (\mathcal{O})=(\mathcal{O}^{\times })^{>0}.
\end{equation*}

\noindent
\textbf{Example 3.} Let $K$ be a non-archimedean model of $T_{an}$ and $%
\mathcal{O}_{w}=\mathbb{R}+\oo$ be the valuation ring corresponding to the
natural valuation of $K$ with $\oo$ the maximal ideal of $\mathcal{O}_{w}$.
Define $\exp :\mathcal{O}_{w}\rightarrow \mathcal{O}_{w}$ as 
\begin{equation*}
\exp (r+\epsilon )=e^{r}\mathbf{e}(\epsilon )\text{ \ for }r\in \mathbb{R}%
\text{ and }\epsilon \in \oo
\end{equation*}%
then $(K,\mathcal{O}_{w},\exp )$ is a model of $T_{an}(\mathcal{O}$-$\exp )$. It may be seen that this is the unique map in $\O_w$ satisfying E1 and E2, since for $x\in \O_w$ there is $n$ such that $|x/n|<1$, thus $\exp x=\exp(n(x/n))=(\mathbf{e}(x/n))^n$. In particular, this applies to each Hahn field $K=\mathbb{R}((t^{\Gamma }))$
with $\Gamma $ divisible, since it can be expanded to a model of $T_{an}$.\\

It follows from Lemma \ref{tconvex1}  that if $(K,\mathcal{O},\exp )$ is a
model of $T_{an}(\mathcal{O}$-$\exp )$, then $(K,\mathcal{O})$ is a model of 
$(T_{an})_{convex}.$ Thus, $\mathcal{O}$ contains the prime model $\mathbb{R}%
_{an}$ of $T_{an}$ by \cite[Remark 2.7]{Van10}. Moreover $\exp $ extends the real exponential because 
$\exp r=\mathbf{e}(r)=e^{r}$ for $r\in \lbrack -1,1]_{\mathbb{R}}$ by E2,
and by E1 this coincidence extends to\ all of $\mathbb{R}$. That is,

\begin{Lema}\label{lemanuevo9} If $(K,\O,\exp)$ is a model of $T_{an}(\O$-$\exp)$, then $(\mathbb{R}_{an},e^x)$ is a substructure of $(\mathcal{O},\exp )$.
\end{Lema}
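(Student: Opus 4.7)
The plan is to decompose the task into two pieces: showing that $\mathbb{R}_{an}$ embeds in $\mathcal{O}$ as an $L_{an}$-substructure, and then showing that this embedding respects the exponential function symbol.

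For the first piece, I would invoke Lemma~\ref{tconvex1}: since $\mathcal{O}$ is a proper convex subring of the model $K$ of $T_{an}$, the pair $(K,\mathcal{O})$ is a model of $(T_{an})_{convex}$, so $\mathcal{O}$ is closed under all restricted analytic function symbols of $L_{an}$ and inherits a natural $L_{an}$-structure. Because $\mathbb{R}_{an}$ is the prime model of $T_{an}$, \cite[Remark 2.7]{Van10} then places $\mathbb{R}_{an}$ inside $\mathcal{O}$ as an $L_{an}$-substructure.

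For the second piece, axiom E2 gives directly $\exp r = \mathbf{e}(r) = e^r$ for every $r \in [-1,1]$. To extend the agreement to an arbitrary $r \in \mathbb{R}$, I would pick $n \in \mathbb{N}$ with $|r|/n \leq 1$ (noting that $r/n \in \mathbb{R} \subseteq \mathcal{O}$, so axiom E1 is applicable) and compute, via iterated applications of E1 followed by E2,
\begin{equation*}
\exp r = \exp\!\bigl(n\cdot (r/n)\bigr) = (\exp(r/n))^n = (\mathbf{e}(r/n))^n = (e^{r/n})^n = e^r.
\end{equation*}
This is precisely the argument sketched in the paragraph immediately preceding the lemma statement.

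There is no genuine obstacle: the only subtlety is making sure that the scaled argument $r/n$ lies inside $\mathcal{O}$ so that E1 can be invoked, and this is automatic once the $L_{an}$-embedding $\mathbb{R}_{an} \hookrightarrow \mathcal{O}$ has been established. The lemma is thus a direct assembly of the prime-model fact for $(T_{an})_{convex}$ with axioms E1 and E2.
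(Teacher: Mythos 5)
Your proposal is correct and follows essentially the same route as the paper: Lemma~\ref{tconvex1} together with \cite[Remark 2.7]{Van10} places $\mathbb{R}_{an}$ inside $\mathcal{O}$ as an $L_{an}$-substructure, and then E2 plus the E1 scaling argument $\exp r = (\mathbf{e}(r/n))^n = e^r$ extends the agreement from $[-1,1]$ to all of $\mathbb{R}$. This is exactly the argument given in the paragraph preceding the lemma in the paper.
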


As a consequence of the completeness of $(T_{an})_{convex}$ and the fact
that for any model $K=\mathbb{R}((t^{\Gamma }))$ of $T_{an}$ one has $\mathbf{e}(\oo)=1+\oo$ (see \cite{Kuh4}, \cite{Kuh5}), it follows that this
happens in any model $(K,\mathcal{O})$ of $(T_{an})_{convex}$.

Therefore, we obtain the following correspondence:

\begin{Propo} \label{lema10}
\label{lifting1} Let $K$ be a model of $T_{an}$ and $\mathcal{O}$ be a
proper convex subring of $K$. Then:

\begin{enumerate}

\item[$(1)$]  If $(K,\mathcal{O},\exp )$ is a model of $T_{an}(\mathcal{O}$-$\exp )$%
, $\exp $ induces a total exponential function $\exp ^{\prime }$ on $\res%
(K)$, and

\item[$(2)$]  If $(\res(K),e)$ is a model of $T_{an}(\exp )$, $e$ induces an
exponential $\exp $ in $\mathcal{O}$ such that $(K,\mathcal{O},\exp )$ is a
model of $T_{an}(\mathcal{O}$-$\exp )$ and $\exp'=e$.
\end{enumerate}
\end{Propo}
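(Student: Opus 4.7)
The proof splits along the two implications. For (1), the natural move is to push the given exponential down to the quotient: define $\exp': \res(K) \to \res(K)$ by $\exp'(\bar{x}) := \overline{\exp(x)}$. The key well-definedness step rests on the identity $\exp(\oo) = 1 + \oo$, which holds because $\oo \subseteq [-1,1]_K$ (so E2 gives $\exp|_{\oo} = \mathbf{e}|_{\oo}$) and because, as just recalled before the statement, $\mathbf{e}(\oo) = 1+\oo$ in any model of $(T_{an})_{convex}$. With this, E1 makes $\exp'$ a homomorphism from $(\res(K),+)$ to $(\res(K)^{>0},\cdot)$; E4 together with surjectivity of $\res$ on $(\O^{\times})^{>0}$ makes it surjective; and E2 combined with the fact that $\res$ is an $L_{an}$-homomorphism on $\O$ (Lemma \ref{tconvex1}) makes $\exp'$ agree with the restricted analytic exponential of $\res(K)$ on $[-1,1]_{\res(K)}$. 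The growth axiom E3 transfers because the strict inequalities it produces are non-infinitesimal and therefore survive residuation. Together these identify $(\res(K), \exp')$ as a model of $T_{an}(\exp)$.

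For (2), assume $(\res(K), e) \models T_{an}(\exp)$; the task is to lift $e$ back to $\O$. Apply Lemma \ref{decomp1} to obtain $\O = A' \oplus \oo$ and $(\O^{\times})^{>0} = B' \cdot (1+\oo)$, with residue isomorphisms $\alpha: A' \to \res(K)$ and $\beta: B' \to \res(K)^{>0}$. For $x = a + \epsilon$ with $a \in A'$, $\epsilon \in \oo$, define
\[
\exp(x) := \beta^{-1}(e(\alpha(a))) \cdot \mathbf{e}(\epsilon).
\]
Axiom E1 follows from additivity of $\alpha$, multiplicativity of $\beta^{-1}$, and the homomorphism properties of $e$ and $\mathbf{e}$. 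Axiom E4 is verified by running the definition backwards: given $y > 1$, write $y = b(1+\eta)$ with $b \in B'$ and $\eta \in \oo$; use surjectivity of $e$ to find $r$ with $e(r) = \beta(b)$, and use $\mathbf{e}(\oo) = 1+\oo$ to solve for the infinitesimal part. Axiom E3 transfers from the analogous growth in $(\res(K),e)$, since $\overline{\exp(x)/x^n}$ equals $e(\bar{x})/\bar{x}^n$, which is non-infinitesimally large. That $\exp$ induces $e$ on the residue field is immediate from the construction.

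The main obstacle is axiom E2: for $|x| \leq 1$ one wants $\exp(x) = \mathbf{e}(x)$, which, after applying multiplicativity of $\mathbf{e}$ on small arguments, reduces to $\beta^{-1}(e(\alpha(a))) = \mathbf{e}(a)$ for $a \in A'$ with $|a| \leq 1$. This compatibility is not automatic from Lemma \ref{decomp1}, and the plan is to strengthen the decomposition by producing a single $L_{an}$-cross section $\sigma: \res(K) \to \O$ of the residue map---available in the $T_{an}$-convex, Henselian setting---and then take $A' := \sigma(\res(K))$ and $B' := \sigma(\res(K)^{>0})$. The required equality then reads $\sigma(e(\bar{a})) = \sigma(\mathbf{e}(\bar{a})) = \mathbf{e}(\sigma(\bar{a})) = \mathbf{e}(a)$, valid because $\sigma$ commutes with the $L_{an}$-function $\mathbf{e}$ and because $e$ extends $\mathbf{e}$ on $[-1,1]_{\res(K)}$.
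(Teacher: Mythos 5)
Your part (1) is the paper's argument verbatim in spirit: define $\exp'(\res x)=\res(\exp x)$, get well-definedness from $\exp(\oo)\subseteq 1+\oo$, and push E1--E4 through the ordered-ring homomorphism $\res$. For part (2) you also start exactly as the paper does --- lift $e$ through the complements $A'$, $B'$ of Lemma \ref{decomp1} and set $\exp(r+\epsilon)=e'(r)\,\mathbf{e}(\epsilon)$ --- but then you diverge in a way that is genuinely valuable. The published proof takes \emph{arbitrary} complements and simply asserts that the resulting structure is a model; it never verifies E2. You correctly identify E2 as the point where arbitrary complements can fail: one needs $e'(r)=\mathbf{e}(r)$, i.e.\ $\mathbf{e}(r)\in B'$, for $r\in A'$ with $|r|\le 1$, and a complement $B'$ twisted by a nontrivial group homomorphism $\res(K)^{>0}\to 1+\oo$ (such homomorphisms exist since both groups are divisible and $\oo\neq\{0\}$) still satisfies Lemma \ref{decomp1} while breaking this identity. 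Your repair --- take $A'=\sigma(\res(K))$ and $B'=\sigma(\res(K)^{>0})$ for an $L_{an}$-elementary section $\sigma$ of the residue map, which exists by the van den Dries--Lewenberg result \cite{Van10} underlying Lemma \ref{tconvex1} --- is sound: $A'\cap\oo=\{0\}$ forces $|r|\le 1$ whenever $|\res(r)|\le 1$, $\mathbf{e}(r)$ lies in $B'$ because $A'$ is closed under $L_{an}$-functions, and $e'(r)=\mathbf{e}(r)$ because $\res(\mathbf{e}(r))=\mathbf{e}(\res r)=e(\res r)$ and $\res$ is injective on $B'$. In short, your proof is correct and, on part (2), makes explicit and fixes a compatibility condition that the paper's own proof leaves silent.
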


\begin{proof}
If $(K,\mathcal{O},\exp )$ is a model of $T_{an}(%
\mathcal{O}$-$\exp )$ and $x,y\in \mathcal{O}$ are such that $\res(x)=%
\res(y)$, then $x=y+\epsilon $ for some $\epsilon \in \oo$, and thus $\exp x=\exp y \cdot\exp \epsilon =\exp
y \cdot(1+\epsilon ^{\prime })$ with $\epsilon ^{\prime }\in \oo$. Therefore, $%
\res(\exp x)=\res(\exp y)$ and we may define $\exp ^{\prime
}:\res(K)\rightarrow \res(K)$ as 
\begin{equation*}
\exp ^{\prime }(\res(x))=\res(\exp x).
\end{equation*}%
Since $\res$ is an homomorphism of ordered rings then\emph{\ }$E1,E3$
hold for $(\res(K),\exp ^{\prime })$ and $\exp ^{\prime }$ is an
strictly increasing embedding from the additive group $\res(K)$ into
the multiplicative group $\res(K)^{>0}$, actually surjective because $%
\res(\exp (\mathcal{O}))$ $=\res((\mathcal{O}^{\times })^{>0})=%
\res(K)^{>0}.$ Finally, since $\res(K)$ is a model of $T_{an}$
with $\mathbf{e}_{\res(K)}(x)=\res(\mathbf{e}_{K}(x)),$ $E2$
holds and the structure $(\res(K),\exp ^{\prime })$ is a model of $%
T_{an}(\exp )$. Reciprocally, let $e$ be a total exponential in $\res(K)$. By
Lemma \ref{decomp1}, there is a group complement $%
A^{\prime }$ to $\oo$ in $\mathcal{O}$ isomorphic to additive $%
\res(K)$, via $\res$, and a group complement $B^{\prime }$ to $1+%
\oo$ in $(\mathcal{O}^{\times })^{>0}$ isomorphic to
multiplicative $\res(K)^{>0}$, via $\res$. Thus, $e$ induces $e^{\prime }:A^{\prime }\rightarrow B^{\prime }$, $e^{\prime }(r)=\res^{-1}(e(\res(r)))$ and $(K,\mathcal{O},\exp )$ becomes a
model of $T_{an}(\mathcal{O}$-$\exp )$ with $\exp (r+\epsilon )=e^{\prime
}(r)\mathbf{e}(\epsilon )$ for $r\in A^{\prime }$ and $\epsilon \in %
\oo$. Moreover, $\exp'=e$ by construction\footnote{The map $e^{\prime }:A^{\prime }\rightarrow B^{\prime }$ introduced in this proof is called a middle exponential by S. Kuhlmann in  \cite{Kuh4, Kuhlibro}.}.
\end{proof}
\medskip

\section{Model completeness of $T_{an}(\mathcal{O}$-$\exp )$}

We will prove in this section that the theory $T_{an}(\mathcal{O}$-$\exp )$
is model complete utilizing a criterion originally due to Sacks (Theorem
17.1 \cite{sacks}): $\ T$\emph{\ is model complete if and only if for any
embeddings\ }$\mathfrak{C}\subseteq \mathfrak{A,}$ $\mathfrak{C}\overset{%
\psi }{\rightarrow }\mathfrak{B}$ \emph{of models of T, with }$\mathfrak{B}$\emph{\ }$|\mathfrak{A}|^{+}$\emph{-saturated, }$\psi $\emph{\ may be extended to
an embedding }$\psi' :\mathfrak{A}\rightarrow $\emph{\ }$\mathfrak{B}$%
\emph{.}

Thus we must show

\begin{Propo}
\label{mainextension} Let $(E,\mathcal{O}_{E},\exp _{E}),$ $(K,\mathcal{O}%
,\exp ),$ $(K^{\ast },\mathcal{O}^{\ast },\exp ^{\ast })$ be models of $%
T_{an}(\mathcal{O}$-$\exp )$ with embeddings 
\begin{equation*}
\begin{array}{cccc}
(K,\mathcal{O},\exp ) &  & 
(K^{\ast },\mathcal{O}^{\ast },\exp ^{\ast })\\ 
\uparrow & \text{ }\nearrow _{\psi } &  &  \\ 
(E,\mathcal{O}_{E},\exp _{E}) &  &  & 
\end{array}%
\end{equation*}%
and the last model $|K|^{+}$-saturated. Then there is an embedding $\psi
^{\prime }:(K,\mathcal{O},\exp )\rightarrow (K^{\ast },\mathcal{O}^{\ast
},\exp ^{\ast })$ extending $\psi .$
\end{Propo}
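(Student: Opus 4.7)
The plan is to apply Sacks's criterion through a Zorn argument: order by extension the partial $L_+$-embeddings $(F,\O_F,\exp_F)\to (K^*,\O^*,\exp^*)$ that extend $\psi$, where $F$ ranges over $L_+$-substructures of $(K,\O,\exp)$ containing $E$. A maximal such embedding $\psi'$ with domain $(F,\O_F,\exp_F)$ exists, and by model completeness of $T_{an}$ (together with the universal axiomatization of $T_{an\ast}$) one may assume $F$ is closed under $^{-1}$ and $\sqrt[n]{\ }$, so that $F\langle a\rangle\subseteq F$ whenever $a\in F$. It suffices to show that if $F\neq K$ the embedding $\psi'$ can be extended to a strictly larger substructure, contradicting maximality. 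Pick $x\in K\setminus F$ and work with the $T_{an}$-closure $F\langle x\rangle$, equipped with the restrictions $\O\cap F\langle x\rangle$ and $\exp\!\restriction\!(\O\cap F\langle x\rangle)$.

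First assume $x\notin\O$. Then $\exp x=0$ by the convention of Definition \ref{def9}, so no exponential data on $x$ must be matched. Using Lemma \ref{natural valuation}(1), after replacing $x$ by $x-a$ for a suitable $a\in F$ one may assume $w(x)\notin w(F^{\times})$. By $|K|^{+}$-saturation of $K^{*}$, realize the cut of $x$ over $\psi'(F)$ by some positive infinite $y\in K^{*}\setminus\O^{*}$ with $w^{*}(y)\notin w^{*}(\psi'(F)^{\times})$. Lemma A applied to the $(T_{an})_{convex}$-reducts (valid by Lemma \ref{tconvex1}), together with Corollary \ref{divisible hull}, shows that the $\O$-predicate on $F\langle x\rangle$ is uniquely determined; the model completeness of $T_{an}$ then supplies the $L_{an}$-embedding $x\mapsto y$, and the $\exp$-clause is vacuous outside the valuation rings.

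Now assume $x\in\O$. If $\res(x)\in\res(F)$, subtract a representative in $F$ to reduce to $x\in\oo$; there $\exp x=\mathbf{e}(x)$ is an $L_{an}$-term, so we realize the cut of $x$ in $\oo^{*}$ over $\psi'(F)$ by saturation and extend via $T_{an}$ model completeness, invoking Lemma A again to preserve the valuation. If instead $\res(x)\notin\res(F)$, invoke Proposition \ref{lema10}: $(\res(K),\exp')$ and $(\res(K^{*}),\exp^{*\prime})$ are models of $T_{an}(\exp)$, and $\res\circ\psi'$ is an embedding $\res(F)\to\res(K^{*})$ of $T_{an}(\exp)$-models. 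By model completeness of $T_{an}(\exp)$, this embedding extends by sending $\res(x)$ to some $\bar y\in\res(K^{*})$ with $\exp^{*\prime}(\bar y)=\res(\psi'(\exp x))$. Using the group-complement decomposition of Lemma \ref{decomp1} and $|K|^{+}$-saturation, lift $\bar y$ to a $y\in\O^{*}$ realizing the $L_{an,\O}$-type of $x$ over $\psi'(F)$; the identity $\exp(r+\epsilon)=\exp(r)\mathbf{e}(\epsilon)$ isolated in the proof of Proposition \ref{lema10} then forces $\exp^{*}(y)=\psi'(\exp x)$ automatically.

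The main obstacle is the last subcase, where one must simultaneously match field, order, convex-valuation, residue and exponential data of $x$ across the embedding. The resolution hinges on two transfer principles: the residue lifting of Proposition \ref{lema10} transports all new exponential information to the residue field, where model completeness of $T_{an}(\exp)$ applies; and Lemma A ensures that the $T_{convex}$-layer is rigid once the underlying $L_{an}$-embedding is chosen. Iterating these single-element extensions contradicts the maximality of $\psi'$, so $F=K$ and the proposition follows.
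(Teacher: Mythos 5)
Your skeleton (Sacks's criterion plus a Zorn argument over partial embeddings, extended one element at a time) is close in spirit to the paper's, but the decisive case is not actually proved, and the single-element extension step is not even well defined. An $L_{+}$-substructure must be closed under $\exp$, and $F\langle x\rangle$ with the restriction of $\exp$ to $\O\cap F\langle x\rangle$ is generally \emph{not} such a structure: for $x\in\O$ a positive unit, $\exp x$ (and then $\log$ of new positive units, $\exp$ of those, etc.) typically lies outside $F\langle x\rangle$. This is exactly why the paper works with maximal \emph{$\log$-closed} intermediate extensions and needs Lemma \ref{ext12} (exp-closure of the maximal object) and, crucially, Lemmas \ref{Nuevo} and \ref{ext13}: when $\res(x)\notin\res(\O_H)$, adjoining $x$ forces $\log x$ into the picture, and $\log x - a$ has a value $w(\log x - a)\notin w(H^{\times})$, i.e.\ a genuinely new archimedean class; iterating produces an infinite tower $x_0,x_1,\dots$ with $w(x_0)<w(x_1)<\cdots<0$ all $\mathbb{Q}$-linearly independent over $w(H^{\times})$, and only the union over this whole tower is $\log$-closed. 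Your claim that one can ``lift $\bar y$ to a $y\in\O^{*}$ realizing the $L_{an,\O}$-type of $x$'' and that the identity $\exp(r+\epsilon)=\exp(r)\mathbf{e}(\epsilon)$ then ``forces'' $\exp^{*}(y)=\psi'(\exp x)$ skips all of this: $\psi'(\exp x)$ is not yet defined ($\exp x\notin F$), the compatibility of the order type of each $x_{n+1}$ over $H_n$ with its image is the content of the $(\ast)\Leftrightarrow(\ast\ast)$ computation in the paper's Lemma \ref{ext13}, and Proposition \ref{lema10} is stated for models, whereas $\res(F)$ for an arbitrary substructure $F$ need not be a model of $T_{an}(\exp)$.

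There are secondary gaps as well. In the subcase $\res(x)\in\res(F)$ you reduce to $x\in\oo$ and treat $\exp$ on the new elements as the $L_{an}$-term $\mathbf{e}$; but adjoining an infinitesimal can enlarge the residue field (multiply by an infinite element of $F$), so elements of $\O\cap F\langle x\rangle$ need not all be of the form $b+\epsilon$ with $b\in\O_F$, and the exponential is then not determined by $\mathbf{e}$ and $\exp_F$. The paper avoids this by splitting globally on whether $\res(E)=\res(K)$: when the residue fields agree it embeds $(K,\O)$ into $(K^{*},\O^{*})$ in one step using model completeness of $(T_{an})_{convex}$ and only then checks $\exp$ is preserved; when they differ it first builds a maximal $\log$-closed, $\log$-preserving extension $H$ and proves $\res(H)=\res(K)$ via the logarithm tower, reducing to the first case. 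You would need to supply the analogues of Lemmas \ref{ext11}, \ref{Nuevo} and \ref{ext13} for your argument to go through.
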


To prove this proposition we consider two cases $\res(E)=\res(K)$
and $\res(E)\not=\res(K).$ In the following, $\exp$ and $\log$ will exclusively refer to the operations in the ring $\O$ and $\exp^{\ast}, \log^{\ast}$ to those in $\O^{\ast}$.

\subsection{Case $\res(E)=\res(K)$}

Since $K^{\ast }$ is $|K|^{+}$ saturated and $(T_{an})_{convex}$ is
model-complete, there is an embedding $\psi ^{\prime }:(K,\mathcal{O}%
)\rightarrow (K^{\ast },\mathcal{O}^{\ast })$ extending $\psi :(E,\mathcal{O}%
_{E})\rightarrow (K^{\ast },\mathcal{O}^{\ast })$. Moreover, for any $a\in 
\mathcal{O}$ there is by hypothesis $b\in \mathcal{O}_{E}$ such that $%
a=b+\epsilon $ with $\epsilon $ in $\oo,$ then since $\oo\subseteq I_{K},$%
\begin{equation*}
\exp a=\exp b\cdot \exp \epsilon =\exp _{E}b\cdot \mathbf{e}_{K}(\epsilon ),
\end{equation*}%
and since $\psi $ preserves exponential and $\psi ^{\prime }$ preserves
restricted analytic functions then 
\begin{equation*}
\psi ^{\prime }(\exp a)=\psi (\exp _{E}b)\psi ^{\prime }(\mathbf{e}%
_{K}(\epsilon))=\exp^{\ast } \psi (b)\cdot \mathbf{e}_{K^{\ast }}(\psi
^{\prime }(\epsilon ))=\exp^{\ast } \psi ^{\prime }(a),
\end{equation*}%
so that $\psi ^{\prime }$ embeds $(K,\mathcal{O},\exp )$ into $(K^{\ast },%
\mathcal{O}^{\ast },\exp ^{\ast })$.

\subsection{Case $\res(E)\not=\res(K)$}

This case will be reduced to the first. Consider an embedding of $(T_{an})_{convex}$ models $\varphi :(H,\mathcal{O}_{H})\overset{\backsimeq}{\rightarrow} (H^*,\O_{H^*})\subseteq (K^{\ast },\mathcal{O}^{\ast })$ maximal with respect to the following properties:\\

- $(E,\O_{E})\subseteq (H,\O_{H})\subseteq (K,\O)$ and $\O_H$ is $\log$-closed; that is, $\log((\O^{\times}_H)^{>0})\subseteq\O_H$.

- $\varphi$ extends $\psi$.

- $\varphi $ is $\log$-preserving.\\

\noindent
Clearly, such maximal extension exists by Zorn's Lemma. We will prove:\\

\textbf{Claim 1.} $\O_H$ is $\exp $-closed.

\textbf{Claim 2.}   $\res(H)=\res(K).$\\

\noindent
Which will put in the first case, since $(H,\O_H,\exp|_{\O_H})$ becomes a model of $T_{an}(\O$-$\exp)$ and $\varphi$ an embedding of these models.

The proof of these two claims will follow from a sequence of lemmas. The next one takes care of the first claim. Recall that $H\langle a\rangle $ denotes the
model of $T_{an}$ generated by the simple extension field $H(a)$ and $w$ denotes the natural valuation of $K$.

\begin{Lema}
\label{ext12} $\mathcal{O}_{H}$ is $\exp $-closed.
\end{Lema}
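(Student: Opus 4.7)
The plan is to derive a contradiction with the maximality of $\varphi$. Suppose, on the contrary, that there exists $a \in \mathcal{O}_H$ with $b := \exp a \notin \mathcal{O}_H$. Since $b \in (\mathcal{O}^\times)^{>0}$ and $\mathcal{O}_H = H \cap \mathcal{O}$, necessarily $b \notin H$; and as $H$, being a model of the o-minimal theory $T_{an}$, is real closed, $b$ is transcendental over $H$. Put $b^* := \exp^*(\varphi(a)) \in \mathcal{O}^*$. The goal is to extend $\varphi$ via $b \mapsto b^*$ to a strictly larger $\log$-preserving $(T_{an})_{convex}$-embedding whose domain has $\log$-closed valuation ring.

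First I establish $w(b) \notin w(H^\times)$, in order to invoke Lemma~A later. If $w(b) = w(h)$ for some $h \in H^{\times,>0}$, then $u := b/h \in \mathcal{O}_w^\times \subseteq \mathcal{O}_H^\times$; since $b, u \in \mathcal{O}^\times$, also $h = b/u \in H \cap \mathcal{O}^\times = \mathcal{O}_H^\times$, so $b = h u \in \mathcal{O}_H$, contradicting the choice of $b$. The parallel statement $w(b^*) \notin w(\varphi(H)^\times)$ in $K^*$ follows from the analogous calculation, using the $\log$-preservation of $\varphi$ to rule out $b^* \in \varphi(\mathcal{O}_H)^\times$: such an equation $b^* = \varphi(h)$ would give $\varphi(a) = \log^* \varphi(h) = \varphi(\log h)$, forcing $b = h \in \mathcal{O}_H$.

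Next I match cuts. By convexity of $\mathcal{O}_H$, every positive $h \in \mathcal{O}_H$ satisfies $h < b$, while every positive $h \in H \setminus \mathcal{O}_H$ lies outside $\mathcal{O} \ni b$ and thus exceeds $b$. Since $\varphi$ preserves $\mathcal{O}$, the analogous cut is realized by $b^*$ over $\varphi(H)$. By o-minimality of $T_{an}$ and model completeness, $b$ and $b^*$ realize corresponding $L_{an}$-types, so $|K|^+$-saturation of $K^*$ provides an $L_{an}$-isomorphism $\tau : H\langle b\rangle \to \varphi(H)\langle b^*\rangle$ extending $\varphi$ with $\tau(b) = b^*$. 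Both $\tau(\mathcal{O}_{H\langle b\rangle})$ and $\mathcal{O}_{\varphi(H)\langle b^*\rangle} := \varphi(H)\langle b^*\rangle \cap \mathcal{O}^*$ are convex subrings of $\varphi(H)\langle b^*\rangle$ extending $\varphi(\mathcal{O}_H)$, with $b^*$ a unit in each; since convex subrings of an ordered field are totally ordered by inclusion, Lemma~A (via $w(b^*) \notin w(\varphi(H)^\times)$) forces them to coincide, so $\tau$ preserves $\mathcal{O}$. Moreover $\log^*(\tau(b)) = \log^* b^* = \varphi(a) = \tau(\log b)$, hence $\tau$ is $\log$-preserving at $b$.

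The ring $\mathcal{O}_{H\langle b\rangle}$ need not yet be $\log$-closed. To enforce this I iteratively adjoin $\log c$ for each positive unit $c$ of the current ring, extending $\tau$ by $\log c \mapsto \log^*(\tau(c))$ and repeating the arguments above at each step. The union yields a proper $\log$-closed extension $(H', \mathcal{O}_{H'}) \supsetneq (H, \mathcal{O}_H)$ and a $\log$-preserving embedding $\tau': (H', \mathcal{O}_{H'}) \to (K^*, \mathcal{O}^*)$ extending $\varphi$ — contradicting its maximality. The main technical obstacle lies in this last step: verifying inductively, at each stage of the $\log$-closure, that the analog of the $w$-value condition holds, so Lemma~A can still be applied and the convex subring is correctly tracked throughout.
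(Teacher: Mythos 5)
Your overall strategy---contradict the maximality of $\varphi$ by extending it to $H\langle \exp a\rangle$ and use Lemma~\ref{Lemma A} to identify the convex subring---is the paper's strategy, but two of your key steps are wrong and a third is left open. First, your proof that $w(b)\notin w(H^{\times})$ does not work: from $w(b)=w(h)$ you get $u=b/h\in\mathcal{O}_w^{\times}$, but the inclusion $\mathcal{O}_w^{\times}\subseteq\mathcal{O}_H^{\times}$ is false ($\mathcal{O}_H=H\cap\mathcal{O}$, and $u=b/h\notin H$ precisely because $b\notin H$), so you cannot conclude $b=hu\in\mathcal{O}_H$. The correct argument must use the exponential structure: writing $b=\exp a=h(1+\varepsilon)$ with $h\in(\mathcal{O}_H^{\times})^{>0}$ and $\varepsilon$ infinitesimal, the $\log$-closedness of $\mathcal{O}_H$ gives $d=\log h\in\mathcal{O}_H$, whence $a-d=\log(1+\varepsilon)$ is a value of the \emph{restricted} analytic logarithm and so lies in $\mathcal{O}_H$; then $\exp a=h\,\mathbf{e}(a-d)\in H$, the desired contradiction. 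Second, your cut computation rests on the false claim that every positive $h\in\mathcal{O}_H$ satisfies $h<b$. The ring $\mathcal{O}_H=H\cap\mathcal{O}$ is convex in $H$ but not in $K$, and $b=\exp a$ can perfectly well lie below elements of $\mathcal{O}_H$; nothing forces $\exp a$ to dominate all of $H\cap\mathcal{O}$. The cut of $b$ over $H$ has to be computed through the logarithm: for $h\in(\mathcal{O}_H^{\times})^{>0}$ one has $\exp a<h\Leftrightarrow a<\log h$, and it is this equivalence, transported by the $\log$-preserving map $\varphi$, that shows $b$ and $b^{*}$ realize corresponding cuts.

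Third, you defer the $\log$-closedness of the extension to an unspecified iteration and yourself flag the resulting bookkeeping as the ``main technical obstacle''; in fact no iteration is needed. Since $w(b)\notin w(H^{\times})$, Corollary~\ref{divisible hull} gives $w(H\langle b\rangle^{\times})=w(H^{\times})\oplus\mathbb{Q}\,w(b)$, so every $g\in(\mathcal{O}_{H\langle b\rangle}^{\times})^{>0}$ has the form $g=h\,b^{q}(1+\varepsilon)$ with $h\in(\mathcal{O}_H^{\times})^{>0}$, and then $\log g=\log h+qa+\log(1+\varepsilon)$ already lies in $\mathcal{O}_{H\langle b\rangle}$ because $\log b=a\in\mathcal{O}_H$. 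This is the point your proposal misses: the single extension $H\langle \exp a\rangle$ is automatically $\log$-closed, so the contradiction with maximality is immediate. (The genuinely iterative construction is needed elsewhere, in the proof that $\res(H)=\res(K)$, not here.) As written, the proposal does not constitute a proof.
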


\begin{proof} By maximality of $(H,\mathcal{O}_{H})$, it is enough to
show that if $x\in \mathcal{O}_{H}$ and $\exp x\in \mathcal{O}\smallsetminus \mathcal{O}_{H}$ then $\mathcal{O}_{H\langle \exp
x\rangle }=\mathcal{O}\cap H\langle \exp x\rangle $ is $\log$-closed and $\varphi $ may be extended to $(H\langle \exp x\rangle ,%
\mathcal{O}_{H\langle \exp x\rangle })$ as a $(T_{an})_{convex}$ embedding
preserving $\log.$ We show first%
\begin{equation*}
w(\exp x)\not\in w(H^{\times}),
\end{equation*}%
otherwise, $\exp x=b(1+\epsilon )$ with $b\in H^{>0},$ $\varepsilon \in \oo_{w}\subseteq I_{K}\subseteq \mathcal{O}$. As $1+\epsilon $
is a positive unit in $\mathcal{O},$ then $b\in (\mathcal{O}^{\times})^{>0}\cap H=(\mathcal{O}
_{H}^{\times})^{>0}$. Let $d=\log(b)\in \mathcal{O}_{H},$ then $\exp x=(\exp
d)(1+\varepsilon )$ and thus $\exp (x-d)=1+\varepsilon .$ Hence, $x-d=\log(1+\varepsilon )\in I_{H}$ (since $\log%
(1+\varepsilon )\leq |\varepsilon |<1)$; and, $\exp (x-d)=\mathbf{e}%
(x-d)\in \mathcal{O}_{H}$ because $\mathcal{O}_{H}$ is closed under the
restricted analytic exponential. As $\exp d=b\in \mathcal{O}_{H},$ this
implies $\exp x\in \mathcal{O}_{H},$ a contradiction.

We have then that $w(H\langle \exp x\rangle^{\times} )=w(H^{\times})+\mathbb{Q}w(\exp x)$ by
Corollary \ref{divisible hull}. Thus, any $g\in \mathcal{O}_{H\langle \exp
x\rangle }^{>0}$ has the form $g=h(\exp x)^{q}(1+\varepsilon )$ with $h\in
H, $ $q\in \mathbb{Q}$ and $\varepsilon \in \oo_{w}.$ As $1+\varepsilon $
and $(\exp x)^{q}$ are positive units in $\mathcal{O}_{H\langle \exp x\rangle }$ then 
$h\in \mathcal{O}_{H\langle \exp x\rangle }\cap H=\mathcal{O}_{H}$ and,
moreover, $h$ is a positive unit in $\mathcal{O}$ whenever $g$ is. Hence, for 
$g\in (\mathcal{O}_{H\langle \exp x\rangle }^{\times })^{>0}:$ 
\begin{equation*}
\log g=\log h+qx+\log(1+\varepsilon )\in \mathcal{O}%
_{H\langle \exp x\rangle }
\end{equation*}%
because $\log h,$ $x\in \mathcal{O}_{H},$ and $\mathcal{O}_{H\langle
\exp x\rangle }$ is divisible and closed under analytic $\log(1+x)$ restricted to $\oo_w \subseteq [-1/2,1/2]_{K}.$ This shows $\O_H\langle \exp(x)\rangle$ is $\log$-closed.

The map $\varphi $ extends to a $T_{an}$-embedding $\varphi ^{\prime
}:H\langle \exp x\rangle \overset{\backsimeq}{\rightarrow} H^{\ast }\langle \exp ^{\ast }\varphi
(x)\rangle \subseteq K^*$ such that $\varphi ^{\prime }(\exp x)=\exp ^{\ast }\varphi (x)$
by o-minimality of $T_{an},$ because $\exp x$ has the same order type over $%
H $ as $\exp^{\ast} \varphi (x)$ has over $H^{\ast }.$ To check this is enough to
consider parameters $h\in (\mathcal{O}^{\times })^{>0}\cap H=(\mathcal{O}%
_{H}^{\times })^{>0}$ because $\exp x$ is a positive unit\ in $\mathcal{O}$
and similarly for $\exp ^{\ast }\varphi (x)$ in $\mathcal{O}^{\ast }$. But $%
\log h$ exists in $\mathcal{O}_{H}$ and thus%
\begin{eqnarray*}
\exp x<h\Leftrightarrow x<\log h\Leftrightarrow \varphi (x)<\varphi (%
\log h)=\log^{\ast }\varphi (h) \\
\Leftrightarrow \exp ^{\ast }\varphi (x)<\varphi (h).
\end{eqnarray*}%

Moreover, $\varphi ^{\prime }$ is $\log$-preserving because $\varphi $ preserves~log and $\varphi ^{\prime }$
preserves $\log(1+x)$ restricted to $\oo_w$:
\begin{eqnarray*}
\varphi ^{\prime }(\log g) &=&\varphi (\log h)+q\varphi
(x)+\varphi ^{\prime }(\log(1+\varepsilon )) \\
&=&\log^{\ast }\varphi (h)+q\log^{\ast }(\exp ^{\ast }\varphi
(x))+\log^{\ast }(1+\varphi ^{\prime }(\varepsilon )) \\
&=&\log^{\ast }(\varphi (h)\varphi ^{\prime }(\exp x)^{q})\varphi
^{\prime }((1+\varepsilon ))=\log^{\ast }(\varphi ^{\prime }(g)).
\end{eqnarray*}%

Finally, Lemma \ref{Lemma A} may be applied to  $(H^*,\O_{H^*})\subseteq (H^*\langle \exp^*\varphi(x)\rangle,\O_1)$, $(H^*\langle \exp^*\varphi(x)\rangle,\O_2)$ with $\O_1=\varphi'(\O_H\langle \exp x\rangle)$, $\O_2=H^*\langle \exp^* \varphi(x)\rangle \cap \O^*$, because $\exp^* \varphi(x)$ is a unit of $\O_1$ and $w(\exp^* \varphi(x))\notin w(H^{*\times})$, to conclude that both rings are equal and  $\varphi ^{\prime }:(H\langle \exp x\rangle ,\mathcal{O}%
_{H\langle \exp x\rangle })\rightarrow $ $(K^{\ast },\mathcal{O}^{\ast })$
is a $(T_{an})_{convex}$-embedding.

\end{proof}

The second claim will follow from the next three
lemmas. 

\begin{Lema}
\label{ext11} If $x\in K\smallsetminus H$ then $w(H(x)^{\times
})\not=w(H^{\times })$.
\end{Lema}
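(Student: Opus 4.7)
My plan is to derive a contradiction with the maximality of $(H, \varphi)$. Suppose toward contradiction that $x \in K \setminus H$ satisfies $w(H(x)^{\times}) = w(H^{\times})$. By Lemma \ref{divhull}, and since $w(H^{\times})$ is divisible, we also obtain $w(H\langle x\rangle^{\times}) = w(H^{\times})$. Set $\O_1 := \O \cap H\langle x\rangle$. The aim is to produce a strict extension of $(H, \O_H, \varphi)$ by $(H\langle x\rangle, \O_1, \varphi')$ satisfying all three maximality conditions: $\O_1$ is $\log$-closed, $\varphi' \supseteq \psi$, and $\varphi'$ preserves $\log$.

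The main obstacle is showing $\O_1$ is $\log$-closed. The idea is to exploit the coincidence of value groups via a three-term factorization. Given $g \in (\O_1^{\times})^{>0}$, pick $h \in H^{>0}$ with $w(h) = w(g)$, so that $g/h \in \O_w^{\times}$; set $s := \res(g/h) \in \mathbb{R}^{>0}$ (the residue under the natural valuation, valued in $\mathbb{R}$) and $\epsilon := g/(sh) - 1 \in \oo_w \cap H\langle x\rangle$, yielding $g = sh(1+\epsilon)$. Since $g \in \O^{\times}$ while $s(1+\epsilon) \in \O_w^{\times} \subseteq \O^{\times}$, the factor $h$ is forced into $H \cap \O^{\times} = \O_H^{\times}$. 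By the assumed $\log$-closure of $\O_H$, $\log h \in \O_H$, and $\log(1+\epsilon)$ is a restricted analytic value lying in $H\langle x\rangle$. Hence
\[
\log g = \log s + \log h + \log(1+\epsilon) \in H\langle x\rangle,
\]
and this element belongs to $\O$ by the $\log$-closure of $\O$, so $\log g \in \O_1$.

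Next, I would extend $\varphi$ to a $(T_{an})_{convex}$-embedding $\varphi' : (H\langle x\rangle, \O_1) \to (K^*, \O^*)$ using the model completeness of $(T_{an})_{convex}$ (Proposition \ref{model complete}) together with the $|K|^{+}$-saturation of $K^*$: choose $x^* \in K^*$ realizing the $L_{an, \O}$-type of $x$ over $H$ transported through $\varphi$, and set $\varphi'(x) = x^*$. The same factorization then shows $\varphi'$ preserves $\log$: each summand of $\log g$ is preserved separately, namely $\log s \in \mathbb{R}$ is fixed by $\varphi'$, $\log h$ is preserved because $\varphi$ preserves $\log$ on $\O_H$, and $\log(1+\epsilon)$ is preserved because $\varphi'$ respects the $L_{an}$ structure.

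Since $x \notin H$ this extension is strict, contradicting the maximality of $(H, \O_H, \varphi)$, so $w(H(x)^{\times}) \neq w(H^{\times})$. The decisive point is the factorization $g = sh(1+\epsilon)$ with $h$ forced into $\O_H^{\times}$; this is precisely what the hypothesis $w(H\langle x\rangle^{\times}) = w(H^{\times})$ provides, and it reduces both the $\log$-closure of $\O_1$ and the $\log$-preservation of $\varphi'$ to properties already built into $(H, \O_H, \varphi)$.
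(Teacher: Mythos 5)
Your proposal is correct and follows essentially the same route as the paper: pass to $H\langle x\rangle$ via Lemma \ref{divhull}, factor each positive unit of $\O\cap H\langle x\rangle$ as an element of $\O_H^{\times}$ times a $1$-unit for the natural valuation (the paper absorbs your real scalar $s$ into $h$), deduce $\log$-closure from that of $\O_H$ together with restricted analytic $\log(1+x)$, extend $\varphi$ by model completeness of $(T_{an})_{convex}$ and saturation, and contradict maximality.
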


\begin{proof} Assume $w(H(x)^{\times })=w(H^{\times })$ for the sake of
contradiction, then $w(H\langle x\rangle ^{\times })=w(H^{\times })$ by Lemma \ref{divhull} because 
$w(H^{\times })$ is already divisible. Thus any $a\in \mathcal{O}_{H\langle
x\rangle }^{>0}$ has the form $a=h(1+\epsilon )$ with $h\in H^{\times }$ and 
$\epsilon \in \oo_{w}$, so $(1+\epsilon )\in \mathcal{O}_{H}.$
Since $(1+\epsilon )$ is positive and invertible in $\mathcal{O}_{H}$ then $%
h\in \mathcal{O}_{H\langle x\rangle }^{>0}\cap H=\mathcal{O}_{H}^{>0}.$
Therefore, $H\langle x\rangle $ is $\log$-closed because $\log%
(a)=\log(h)+\log(1+\varepsilon ),$ which belongs to $H\langle
x\rangle $ since $\log(h)\in \mathcal{O}_{H}$ and $\mathcal{O}%
_{H\langle x\rangle }$ is closed under $\log(1+x)$ restricted to $\oo_w$.

Using model completeness of $(T_{an})_{convex}$ and Sacks criterium there is
a $(T_{an})_{convex}$-embedding $\varphi ^{\prime }:(H\langle x\rangle ,%
\mathcal{O}_{H\langle x\rangle })\rightarrow $ $(K^{\ast },\mathcal{O}^{\ast
})$ extending $\varphi ,$ which preserves $\log$ because $\varphi $
preserves log in $\mathcal{O}_{H}$ and $\varphi ^{\prime }$ preserves $\log(1+x)$ restricted to $\oo_w:$ 
\begin{equation*}
\varphi ^{\prime }(\log a)=\varphi (\log h)+\varphi ^{\prime }(%
\log(1+\varepsilon ))=\log^{\ast }\varphi (h)+\log^{\ast
}\varphi ^{\prime }(1+(\varepsilon ))=\log^{\ast }\varphi ^{\prime
}(g).
\end{equation*}%
This contradicts the maximality of $\varphi .$

\end{proof}

Using the two previous lemmas we have:

\begin{Lema}
\label{Nuevo}If $x\in (\mathcal{O}^{\times })^{>0}\ $and $\res(x)\not\in \res(%
\mathcal{O}_{H}\mathcal{)}$ then there is $a\in \mathcal{O}_{H}$ such that $%
\res(\log x-a)\not\in \res(H)$ and $w(\log x-a)\not\in w(\mathcal{O}%
_{H}^{\times})$.
\end{Lema}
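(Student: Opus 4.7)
The plan is to set $y = \log x$, which lies in $\O$ (by axiom~E4 applied to $x \in (\O^{\times})^{>0}$), and then construct the desired $a$ by applying the natural-valuation machinery to $y$ over $H$. First I would verify two preliminary facts. That $y \notin H$: otherwise $y \in H \cap \O = \O_H$, and by Claim~1 (Lemma~\ref{ext12}) $x = \exp y \in \O_H$, contradicting $\res(x) \notin \res(\O_H)$. That $\res(y) \notin \res(H)$: otherwise $\res(y) = \res(b)$ for some $b \in \O_H$, so $y - b \in \oo \subseteq I_K$, and axiom~E2 gives $\exp(y-b) = \mathbf{e}(y-b) \in 1 + \oo$; hence $x = \exp(b)\cdot \mathbf{e}(y-b) \in \exp(b) + \oo$, and since $\exp(b) \in \O_H$ by Claim~1, $\res(x) = \res(\exp b) \in \res(\O_H)$, a contradiction.

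With $y \in K \setminus H$ in hand, Lemma~\ref{ext11} yields $w(H(y)^{\times}) \neq w(H^{\times})$, and Lemma~\ref{natural valuation}(1) (noting that $H$ is real closed as a model of $T_{an}$) produces $a \in H$ with $w(y-a) \notin w(H^{\times})$. Since $w(\O_H^{\times}) \subseteq w(H^{\times})$, this immediately yields $w(y-a) \notin w(\O_H^{\times})$; and once $a \in \O_H$ is confirmed, the identity $\res(y-a) = \res(y) - \res(a)$, combined with $\res(a) \in \res(H)$ and $\res(y) \notin \res(H)$, gives $\res(y-a) \notin \res(H)$.

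The main obstacle is therefore to verify that $a \in \O$. I would argue by contradiction: if $a \notin \O$ then $a^{-1} \in \oo$, and since $y \in \O$ this forces $y/a \in \oo$. Using the inclusion $\oo \subseteq \oo_w$ (a consequence of $\O_w \subseteq \O$ recorded in Section~2.1), $y/a$ is in fact $w$-infinitesimal, so $1 - y/a$ is a $w$-unit with $w(1 - y/a) = 0$. Then $w(y-a) = w\bigl(-a(1 - y/a)\bigr) = w(a) \in w(H^{\times})$, contradicting the choice of $a$. Hence $a \in \O_H$, and both conditions of the lemma follow.
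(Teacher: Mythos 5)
Your proof is correct, and its skeleton matches the paper's: set $y=\log x$, show $y\notin H$ and $\res(y)\notin\res(H)$ via the $\exp$-closedness of $\mathcal{O}_H$ (Lemma \ref{ext12}) together with $\mathbf{e}(\oo)\subseteq 1+\oo$, then apply Lemma \ref{ext11} and Lemma \ref{natural valuation}(1) to obtain $a\in H$ with $w(y-a)\notin w(H^{\times})$, which is stronger than the stated $w(y-a)\notin w(\mathcal{O}_H^{\times})$. Where you genuinely diverge is in placing $a$ inside $\mathcal{O}$. The paper treats the case $a\notin\mathcal{O}$ by passing to $a^{-1}\in\oo$ and arguing that $w(\log x-a^{-1})\in w(H^{\times})$ would give $\log x-a^{-1}=h+h\varepsilon$ with $h\in\mathcal{O}_H$, $\varepsilon\in\oo_w$, contradicting the residue claim; note that this step tacitly uses $h\varepsilon\in\oo$, which is not automatic since $\oo_w$ is an ideal of $\mathcal{O}_w$ but not of $\mathcal{O}$, so $h$ a large unit of $\mathcal{O}$ times $\varepsilon\in\oo_w\setminus\oo$ could fail to lie in $\oo$. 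You instead show the case $a\notin\mathcal{O}$ cannot occur at all: $y/a\in\mathcal{O}\cdot\oo\subseteq\oo\subseteq\oo_w$ makes $1-y/a$ a $w$-unit, whence $w(y-a)=w(a)\in w(H^{\times})$, contradicting the choice of $a$. This is shorter, renders the paper's second case vacuous, and sidesteps the delicate product estimate entirely; it is the preferable way to close the argument.
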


\begin{proof} Notice that $\res(\log x-a)\not\in \res(O_H)$ for any $a\in \O_{H}$, otherwise, $\res(\log x)=\res(h)$ with $h\in \O_{H}$, $\log x=h+\varepsilon$ with $\varepsilon \in \oo$ and $x=\exp h\cdot \exp\varepsilon =(\exp h)\cdot (1+\varepsilon ^{\prime })=\exp h+\varepsilon ^{\prime \prime }$ with $\epsilon''$ in $\oo$, so $\res(x)\in \res(\O_H)$ by Lemma \ref{ext12}, a contradiction. 

For the second claim of the lemma, as $x\in \mathcal{O}\smallsetminus \mathcal{O}_{H}$ and $%
\mathcal{O}_{H}$ is $\exp$-closed then $\log x\not\in H$ and by Lemma \ref%
{ext11} $w(H(\log x)^{\times })\not=w(H^{\times }),$ then by Lemma \ref%
{natural valuation} there is $a\in H$ such that $w(\log x-a)\not\in
w(H^{\times}).$ If $a\in \mathcal{O},$ thus $a\in \mathcal{O}\cap H=\mathcal{O}_{H}$
and we are done. If $a\not\in \mathcal{O}$, then $a^{-1}\in %
\oo\subseteq \O$, it remains to show that $w(\log 
x-a^{-1})\not\in w(H^{\times}).$ Otherwise, $\log x-a^{-1}=h(1+\varepsilon )$
with $h\in H,$ $\varepsilon \in \oo_{w}\subseteq \mathcal{O}_{w}\subseteq 
\mathcal{O}$ \footnote{%
Note that it is not the case that $\mathcal{O}_{w}\subseteq \mathcal{O}_{H}.$%
}. As $1+\varepsilon $ is a unit in $\mathcal{O}$ then $h\in \mathcal{O}\cap
H=\mathcal{O}_{H},$ therefore, $\log x-a^{-1}=h+h\varepsilon \in 
\mathcal{O}_{H}+\oo,$ contradicting the first claim.

\end{proof}

The next lemma contradicts the maximality of $\varphi $\ and thus finishes the
proof of Claim 2: $\res(H)=\res(K).$ We follow the strategy of the proof of Lemma
4.4 in \cite{Van1} to build a $\log$-closed extension. Similar ideas have been used to prove Theorem 6.44 in \cite[Chapter 6]{Kuhlibro}, but we must allow here for the fact that only
positive units of $\mathcal{O}$ have a logarithm in our case.

\begin{Lema}
\label{ext13} If $x\in \mathcal{O}$ and $\res(x)\not\in \res(O_H)$ then there is
a sequence $(x_{n})$ in $(\mathcal{O}^{\times })^{>0}$ such that $%
w(x_{0}),w(x_{1}),...$ are $\mathbb{Q}$\emph{-}linearly independent over $%
w(H^{\times })$. Moreover, if $H^{\prime }=\cup _{n\in \omega }H\langle x_0,x_1,...,x_n\rangle$, then  $\mathcal{O}^{\prime }=\mathcal{O}\cap H' $ is $\log$-closed and $\varphi $ may be extended to a $%
(T_{an})_{convex}$ $\log$-preserving embedding $\varphi ^{\prime }:(H',\mathcal{O}^{\prime })$ $\rightarrow (K^{\ast },%
\mathcal{O}^{\ast })$.
\end{Lema}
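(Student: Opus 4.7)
My approach is to construct a sequence $(x_n)$ by iterated logarithms and shifts, take $H'=\bigcup_n H_n$ with $H_n=H\langle x_0,\ldots,x_{n-1}\rangle$, prove that $\mathcal{O}'=\mathcal{O}\cap H'$ is $\log$-closed by decomposing value groups, and extend $\varphi$ using $|K|^+$-saturation and o-minimality. A preliminary application of Lemma \ref{Nuevo} to the given $x$ (a positive unit after a sign adjustment, since $\res(x)\neq 0$) produces $x_0\in(\mathcal{O}^\times)^{>0}$ of the form $\pm(\log x-a)$ with $a\in\mathcal{O}_H$, enjoying $w(x_0)\notin w(H^\times)$ and $\res(x_0)\notin\res(\mathcal{O}_H)$. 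Inductively, given $x_n\in(\mathcal{O}^\times)^{>0}$ with $\res(x_n)\notin\res(\mathcal{O}_{H_n})$, I repeat the pattern over $(H_n,\mathcal{O}_{H_n})$ to obtain $a_n\in\mathcal{O}_{H_n}$ with $w(\log x_n-a_n)\notin w(H_n^\times)$ and $\res(\log x_n-a_n)\notin\res(H_n)$, and set $x_{n+1}=\varepsilon_n(\log x_n-a_n)$ with $\varepsilon_n=\pm 1$ for positivity. By Corollary \ref{divisible hull},
\[
w(H_{n+1}^\times)=w(H^\times)\oplus\bigoplus_{i=0}^{n}\mathbb{Q}\,w(x_i),
\]
yielding the required $\mathbb{Q}$-linear independence.

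For the $\log$-closure of $\mathcal{O}'$, note that $H'$ is a model of $T_{an}$ by model completeness. Any $g\in(\mathcal{O}'^\times)^{>0}$ lies in some $\mathcal{O}_{H_{n+1}}$, and the direct-sum decomposition of $w(H_{n+1}^\times)$ forces $g=h\cdot x_0^{q_0}\cdots x_n^{q_n}\cdot(1+\epsilon)$ with $h\in(\mathcal{O}_H^\times)^{>0}$, $q_i\in\mathbb{Q}$, $\epsilon\in\oo_w$. Then
\[
\log g=\log h+\sum_{i=0}^{n} q_i\log x_i+\log(1+\epsilon),
\]
where $\log h\in\mathcal{O}_H$ by the assumed $\log$-closedness of $\mathcal{O}_H$, $\log x_i=\varepsilon_i x_{i+1}+a_i\in H_{i+2}\subseteq H'$ by the recursion, and $\log(1+\epsilon)\in H_{n+1}$ as a value of the restricted analytic function $\log(1+X)$ on $\oo_w$.

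The embedding $\varphi'$ is built recursively using $|K|^+$-saturation of $K^*$: choose $y_0\in\mathcal{O}^*$ realizing over $\varphi(H)$ the same $T_{an}$-type as $x_0$ over $H$ (the type carrying the $\mathcal{O}$-information), then define $y_{n+1}=\varepsilon_n(\log^* y_n-\varphi(a_n))$ and set $\varphi'(x_n)=y_n$. O-minimality of $T_{an}$ makes $\varphi'$ a $T_{an}$-embedding on each $H_{n+1}$, and Lemma \ref{Lemma A}, applied stage by stage, shows the two convex subrings $\varphi'(\mathcal{O}_{H_{n+1}})$ and $\varphi'(H_{n+1})\cap\mathcal{O}^*$ of $\varphi'(H_{n+1})$ coincide (since $w^*(y_n)\notin w^*(\varphi(H_n)^\times)$), so $\varphi'$ is a $(T_{an})_{convex}$-embedding. $\log$-preservation at $x_n$ is built into the definition of $y_{n+1}$, and propagates to arbitrary elements via the decomposition above, contradicting the maximality of $\varphi$ and completing Claim~2.

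The main technical challenge is legitimizing the inductive use of Lemma \ref{Nuevo}: its proof invoked Lemma \ref{ext11}, whose hypothesis relies on the maximality of $H$, so one must justify the analogous statement over each $H_n$ — either by observing that $\log$-closedness of $\mathcal{O}_{H_n}$ at an intermediate stage would already contradict the maximality of $\varphi$, or by reproving the value-theoretic step directly in the iterated setting. A closely related subtlety is the propagation of the residue condition $\res(x_n)\notin\res(\mathcal{O}_{H_n})$ through the recursion, since adjoining $x_{n-1}$ might a priori enlarge the $\mathcal{O}$-residue field; one expects that the $w$-value independence $w(x_{n-1})\notin w(H_{n-1}^\times)$ is precisely what prevents this growth.
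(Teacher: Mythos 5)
Your overall architecture matches the paper's (iterated logarithms, the decomposition $w(H_n^{\times})=w(H^{\times})\oplus\bigoplus_i\mathbb{Q}\,w(x_i)$, the closing application of Lemma \ref{Lemma A}), but the point you yourself flag as ``the main technical challenge'' is a genuine gap, and it is exactly where the paper takes a different route. You apply Lemma \ref{Nuevo} over $(H_n,\mathcal{O}_{H_n})$ at each stage so as to get $w(\log x_n-a_n)\not\in w(H_n^{\times})$ and hence linear independence for free. But Lemma \ref{Nuevo} rests on Lemma \ref{ext11}, whose proof is a maximality argument available only for $H$ itself; to run it for $H_n$ you would already need to know that $\mathcal{O}_{H_n}$ is $\log$-closed and that $\varphi$ extends $\log$-preservingly to $H_n$ --- which is part of what the present lemma is supposed to establish. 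You would also need $\res(x_n)\not\in\res(\mathcal{O}_{H_n})$, which you only ``expect.'' Neither of your two suggested escape routes is carried out. The paper avoids the circularity entirely: Lemma \ref{Nuevo} is only ever applied over $H$, so the construction yields merely $w(x_n)\not\in w(H^{\times})$ and $\res(x_n)\not\in\res(\mathcal{O}_H)$ for every $n$, and the $\mathbb{Q}$-linear independence over $w(H^{\times})$ is then proved by a separate direct computation: the normalization $x_0=|\log x-a|^{\pm 1}$ (reciprocal, not just sign, chosen so that $w(x_0)<0$) forces the chain $w(x_n)<w(\log x_n)\leq w(x_{n+1})<0$, and a putative relation $w(x_m)=\sum_{i>m}q_i w(x_i)+w(b)$ collapses, after taking logarithms, to $w(x_{m+1})=w(\log b-a_m)\in w(H^{\times})$, a contradiction. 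Your sketch never establishes this valuation chain (nor the normalization $w(x_0)<0$), so even the ``reprove the value-theoretic step directly'' fallback is not available as written.

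A second omission concerns the embedding. You define $y_{n+1}=\varepsilon_n(\log^{\ast}y_n-\varphi(a_n))$ so that $\log$-preservation at $x_n$ is automatic, but then the existence of a $T_{an}$-embedding $\varphi_{n+1}$ extending $\varphi_n$ with $\varphi_{n+1}(x_{n+1})=y_{n+1}$ is not: one must verify that $y_{n+1}$ realizes over $H_n^{\ast}$ the image under $\varphi_n$ of the cut of $x_{n+1}$ over $H_n$. This is the longest part of the paper's proof (the chain of equivalences reducing $h>x_{n+1}$ to $b^{q}>x_m$, including the $(\ast)\Leftrightarrow(\ast\ast)$ step, which again leans on the valuation chain above). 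Saturation would hand you a realization of the type but not one satisfying the recursion; the recursion hands you the $\log$-identity but not the type. The paper reconciles the two by the explicit cut computation; your proposal asserts rather than proves this reconciliation.
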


\begin{proof} If $\res(x)\not\in \res(\mathcal{O}_{H})$ then $\res(x)\not=0$
thus $x$ is a unit of $\mathcal{O},$ and we may assume without loss of
generality $x>0,$ thus $x\in (\mathcal{O}^{\times })^{>0}$. Moreover, $%
x\not\in \mathcal{O}_{H}$ and we may define $x_{0}=|\log x-a|^{\pm 1},$
with $a$ chosen as in Lemma \ref{Nuevo} and the exponent chosen so that $w(x_{0})<0$%
. Clearly $|\log x-a|\in \mathcal{O}^{\times }$ because $\res(|\log%
x-a|)\not=0.$ By Lemma \ref{Nuevo} we may define inductively$\ x_{n+1}=|%
\log x_{n}-a_{n}|\ $so that for all $n$

\begin{equation*}
w(x_{n})\not\in w(H^{\times }),\text{ }\res(x_{n})\not\in \res(\mathcal{O}_{H})%
\text{ (hence, }x_{n}\in (\mathcal{O}^{\times })^{>0}),
\end{equation*}%
and it follows by induction that 
\begin{equation*}
w(x_{n})<w(\log x_{n})\leq w(x_{n+1})<0,
\end{equation*}%
because we have chosen $w(x_{0})<0,$ and assuming $w(x_{n})<0\ $we have $%
w(x_{n})<w(\log x_{n})<0$ because $x_{n}>k^{2}>k$ for all $k\in \mathbb{%
N}$ and $\log x_{n}\not\in \mathcal{O}_{w}$ ($\mathcal{O}_{w}$ is
closed under $\exp )$. Also, $w(\log x_{n})\leq w(x_{n+1})$, otherwise $%
w(x_{n+1})=\min (w(\log x_{n}),w(a_{n}))=w(a_{n})\in w(H^{\times }),$ a
contradiction. Moreover, $w(x_{n+1})$ $<0$, since $x_{n+1}\in \mathcal{O}%
_{w} $ would imply $\exp (\pm x_{n+1})\in \mathcal{O}_{w}^{\times }$ and so $w(x_{n})=w(\exp (\pm x_{n+1}))+w(\exp a_{n})=w(\exp a_{n})\in w(H^{\times}),$ a contradiction. 

To show the $\mathbb{Q}$\emph{-}linear independence of $%
w(x_{0}),w(x_{1}),... $ over \emph{\ }$w(H^{\times })$, assume there are $%
m<n $, rational numbers $q_{m+1},...,q_{n}$, and an element $b\in H$ such
that 
\begin{equation*}
w(x_{m})=\sum\limits_{i=m+1}^{n}q_{i}w(x_{i})+w(b).
\end{equation*}%
Equivalently, for some $c\in \mathcal{O}^{>0}$ with $w(c)=0$ we have $
x_{m}=cb\prod\limits_{i=m+1}^{n}x_{i}^{q_{i}}$. Thus, $\log x_{m}=\log c+\log b+\sum\limits_{i=m+1}^{n}q_{i}\log x_{i}$ and 
\begin{eqnarray*}
w(x_{m+1}) &=&w(\log x_{m}-a_{m})=w(\log c+\log b+\sum\limits_{i=m+1}^{n}q_{i}\log x_{i}-a_{m}) \\
&=&w(\log b-a_{m})\in w(H^{\times })\text{ }
\end{eqnarray*}%
because $w(\log c+\sum\limits_{i=m+1}^{n}q_{i}\log%
x_{i})>w(x_{m+1}),$ since $w(\log x_{i})<0\leq w(\log c)$ for $%
i=m+1,...,n.$ But this contradicts that $w(x_{m+1})\not\in w(H^{\times })$.

Now, define $H_{n}=H\langle x_{0},..,x_{n}\rangle ,$ then by induction and Corollary \ref{divisible hull}: 
\begin{equation*}
w(H_{n}^{\times })=w(H^{\times })\oplus \mathbb{Q}w(x_{0})\oplus \cdots
\oplus \mathbb{Q}w(x_{n});
\end{equation*}%
$w(H_{0}^{\times })=w(H^{\times })\oplus \mathbb{Q}w(x_{0})$ because $%
w(x_{0})\not\in w(H^{\times})$, and the displayed identity implies $w(x_{n+1})\not\in
w(H_{n}^{\times})$ by linear independence, thus $w(H_{n+1}^{\times})=w(H_{n}\langle
x_{n+1}\rangle^{\times})=w(H_{n}^{\times })\oplus \mathbb{Q}w(x_{n+1})=w(H^{\times
})\oplus \mathbb{Q}w(x_{0})\oplus \cdots \oplus \mathbb{Q}w(x_{n+1})$. From this we get that any $a\in H_{n}^{\times }$ has the form 
\begin{equation*}
a=b(1+\epsilon )\prod\limits_{i=0}^{n}x_{i}^{q_{i}}
\end{equation*}
for some $b\in H$, $\epsilon \in H_{n}$ with $w(\epsilon )>0,$ and rational
numbers $q_{0},...,q_{n}$.

\noindent
If $a\in \mathbf{(}\mathcal{O}^{\times }\mathcal{)}^{>0}\cap H_{n}$ then $%
b\in \mathbf{(}\mathcal{O}_{H}^{\times }\mathcal{)}^{>0}$ because $%
x_{i},1+\varepsilon $ are positive units in $\mathcal{O},$ so $\log b$
exists and belongs to $H$; hence, 
\begin{equation*}
\log a=\log b+\log(1+\epsilon )+\sum\limits_{i=0}^{n}q_{i}%
\log x_{i}\in \mathcal{O}\cap H_{n+1}
\end{equation*}%
because $\log(1+\epsilon )\in \mathcal{O}\cap H_{n}$.

Define $H^{\prime }=\cup _{n\in \omega }H_{n},$ $\mathcal{O}^{^{\prime }}=%
\mathcal{O\cap }H^{\prime }$ then the above means that $(H^{\prime },%
\mathcal{O}^{\prime })$ is $\log$-closed. It remains to show that $\varphi $
may be extended to a $(T_{an})_{convex}$ $\log$-preserving embedding $\varphi
^{\prime }:(H^{\prime },\mathcal{O}^{\prime })\rightarrow (K^{\ast },%
\mathcal{O}^{\ast })$.

Let $H^*=\phi(H)$ and use model completeness of $(T_{an})_{convex}$ to extend $\varphi $
to 
\begin{equation*}
\varphi _{0}:(H\langle x_{0}\rangle ,\mathcal{O\cap }H\langle x_{0}\rangle
)\approx (H^{\ast }\langle y_{0}\rangle ,\mathcal{O^{\ast }\cap }H^{\ast
}\langle y_{0}\rangle )\subseteq (K^{\ast },\mathcal{O}^{\ast })
\end{equation*}%
then $y_{0}=\varphi _{0}(x_{0})$ inherits the properties

\begin{equation*}
\text{ }w(y_{0})\not\in w(H^{\ast \times }),\text{ }w(y_{0})<0\text{, }%
y_{0}\in (\mathcal{O}^{\ast \times })^{>0}.
\end{equation*}%
Notice that $\varphi ^{-1}:(H^{\ast },\mathcal{O}_{H^{\ast }})\overset{\backsimeq}{\rightarrow} (H,%
\mathcal{O}_{H})\subseteq (K,\mathcal{O})$ is also maximal, therefore if we
define inductively $y_{n+1}=|\log y_{n}-\varphi (a_{n})|$ in $K^{\ast
}$ we obtain as for the $x_{n}$ the independence of the $w(y_n)$ over $w(H^{*\times})$ and 
\begin{equation*}
w(y_{n})\not\in w(H^{*\times }),w(y_{n})<w(\log y_{n})\leq w(y_{n+1})<0, \text{ }y_{n}\in (\mathcal{O}^{\times })^{>0}.
\end{equation*}%

Call $H^*_n=H^*\langle y_0,...,y_n \rangle$ and assume by induction hypothesis there is an embedding $\varphi _{n}:(H_{n},\mathcal{O}_{n})\overset{\backsimeq}{\rightarrow}
(H_{n}^{\ast },\mathcal{O}_{n}^{\ast})\subseteq (K^{\ast },\mathcal{O}^{\ast })$
such that $\varphi _{n}(x_{i})=y_{i}$ for $i\leq n.$ To extend $\varphi _{n}$
to $\varphi _{n+1}$ we prove first that the order type of $x_{n+1}$ over $%
H_{n}$ maps by $\varphi _{n}$ to the type of $y_{n+1}$ over $\varphi
_{n}(H_{n}).$ Since $x_{n+1}$ and $y_{n+1}$ are
positive unit in $\mathcal{O}$, $\mathcal{O}^{\ast },$ respectively, it is
enough to consider parameters $h\in (\mathcal{O}_{H_{n}}^{\times })^{>0}$,
which we have seen have the form $h=b(1+\epsilon
)\prod\limits_{i=m}^{n}x_{i}^{q_{i}}$ with $b\in \mathbf{(}\mathcal{O}%
_{H}^{\times }\mathcal{)}^{>0}$, $m\leq n$, and $q_{m}\not=0.$ Then

$h>x_{n+1}\Leftrightarrow \log h>\log x_{n+1}\Leftrightarrow \log b+\log(1+\epsilon )+\sum\limits_{i=m}^{n}q_{i}\log x_{i}>%
\log x_{n+1}$

$\Leftrightarrow \log b+\log(1+\epsilon
)+\sum\limits_{i=m+1}^{n+1}q_{i}\log x_{i}>-q_{m}\log x_{m}$ (with 
$q_{n+1}=-1)$

$\Leftrightarrow q\log b+q\log(1+\epsilon
)+\sum\limits_{i=m+1}^{n+1}qq_{i}\log x_{i}>\log x_{m}$ (with $%
q=-q_{m}^{-1})$

$\Leftrightarrow q\log b+q\log(1+\epsilon
)+\sum\limits_{i=m+1}^{n+1}qq_{i}\log x_{i}-a_{m}>\varepsilon x_{m+1}$
\ $(\ast )$

$\Leftrightarrow q\log b-a_{m}>\varepsilon x_{m+1}$ \ ($\ast \ast $)
(justified later)

$\Leftrightarrow q\log b>\log x_{m}$

$\Leftrightarrow b^{q}>x_{m}$

$\Leftrightarrow \varphi (b)^{q}>\varphi _{n}(x_{m})=y_{m}$ (since $m\leq n)$

$\Leftrightarrow \varphi _{n}(h)>y_{n+1}.$

\noindent
The last equivalence holds because $\varphi _{n}(h)=\varphi
_{n}(b)(1+\varphi _{n}(\epsilon ))\prod\limits_{i=m}^{n}y_{i}^{q_{i}}$, and
mimicking the above arguments we have $\varphi
_{n}(h)>y_{n+1}\Leftrightarrow \varphi _{n}(b)^{q}>y_{m}$. To justify the equivalence of $(\ast )\Leftrightarrow (\ast \ast )$, calling 
$\alpha =q\log b-a_{m},$ $\beta =q\log(1+\epsilon
)+\sum\limits_{i=m+1}^{n+1}qq_{i}\log x_{i},$ and noticing that $%
w(\beta )\geq $ $w(\log x_{m+1})>w(x_{m+1}),$ we have

\medskip

$\alpha +\beta >\varepsilon x_{m+1}$ $(\ast )$

$\Rightarrow w(\alpha +\beta )\leq w(x_{m+1})<w(\beta )$

$\Rightarrow w(\alpha +\beta )=w(\alpha )\leq w(x_{m+1})$

$\Rightarrow w(\alpha )<w(x_{m+1}),$ since $w(x_{m+1})\not\in w(H^{\times})$

$\Rightarrow \alpha >x_{m+1}$ $(\ast \ast )$

$\Rightarrow w(\alpha )<w(x_{m+1})$

$\Rightarrow w(\alpha +\beta )=w(\alpha )<w(x_{m+1})$

$\Rightarrow \alpha +\beta >\varepsilon x_{m+1}$ $(\ast )$.

\medskip
\noindent
By o-minimality of $T_{an},$ there is a $L_{an}$-embedding $\varphi
_{n+1}:H_{n}\langle x_{n+1}\rangle \overset{\backsimeq}{\rightarrow} H_{n}^{\ast }\langle
y_{n+1}\rangle \subseteq K^{\ast }$ extending $\varphi _{n}$ such that $%
\varphi _{n+1}(x_{n+1})=y_{n+1}.$ Since $y_{n+1}$ belongs to $\varphi_{n+1}(\O_{n+1})\subseteq \O^* \cap H^*_n\langle y_{n+1}\rangle$, it is a unit in the first ring, and $w(y_{n+1})\not\in w(H^{*\times}_n)$, we obtain by Lemma \ref{Lemma A} that $\varphi _{n+1}(\mathcal{O}_{n+1})=\O^*\cap H^*_n\langle
y_{n+1}\rangle$ and $\varphi _{n+1}$ is a ($T_{an})_{convex}$-embedding.

Define $\varphi ^{\prime }=\cup _{n}\varphi _{n},$ then $\varphi ^{\prime }(%
\log x_{i})=\varepsilon \varphi ^{\prime }(x_{i+1})+\varphi ^{\prime
}(a_{n})=\varepsilon y_{i+1}+\varphi (a_{n})=\log y_{i},$ and by the
formula for $\log a$ above $\varphi ^{\prime }$ preserves log.

\end{proof}

\medskip

This finishes the proof of Claim 2 and thus we obtain the desired result:

\begin{Tma} \label{modelcomplete}
$T_{an}(\mathcal{O}$-$\exp )$ is model complete.
\end{Tma}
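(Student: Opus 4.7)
The plan is to apply Sacks' criterion (already recalled at the start of Section~4), which reduces model completeness to the following extension property: given embeddings $\mathfrak{C} \subseteq \mathfrak{A}$ and $\mathfrak{C} \xrightarrow{\psi} \mathfrak{B}$ of models of $T_{an}(\O$-$\exp)$ with $\mathfrak{B}$ being $|\mathfrak{A}|^{+}$-saturated, $\psi$ extends to an embedding $\mathfrak{A}\to\mathfrak{B}$. This is exactly Proposition~\ref{mainextension}, so the theorem follows immediately once that proposition is in hand. My proof would therefore be a one-line reduction invoking Sacks' criterion and Proposition~\ref{mainextension}; the real work is concentrated in establishing the proposition, which is what the bulk of Section~4 is devoted to.

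To prove the extension property, I would split into two cases according to whether $\res(E)=\res(K)$. When $\res(E)=\res(K)$: by model completeness of $(T_{an})_{convex}$ plus $|K|^{+}$-saturation of $K^*$, $\psi$ extends to a $(T_{an})_{convex}$-embedding $\psi'\colon(K,\O)\to(K^*,\O^*)$. To verify preservation of $\exp$, write any $a\in\O$ as $a=b+\epsilon$ with $b\in\O_{E}$ and $\epsilon\in\oo$ (possible because $\res(E)=\res(K)$), then use $\exp a = \exp_{E} b\cdot \mathbf{e}_{K}(\epsilon)$ together with the fact that $\psi$ preserves $\exp_{E}$ and $\psi'$ preserves restricted analytic functions.

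The harder case $\res(E)\neq\res(K)$ I would reduce to the previous one by a Zorn argument: pick a maximal log-preserving $(T_{an})_{convex}$-embedding $\varphi\colon(H,\O_{H})\to(H^*,\O_{H^*})\subseteq(K^*,\O^*)$ extending $\psi$, with $E\subseteq H\subseteq K$ and $\O_{H}$ log-closed. The goal is to prove two closure claims: \textbf{(Claim 1)} $\O_{H}$ is $\exp$-closed; and \textbf{(Claim 2)} $\res(H)=\res(K)$. Once both hold, $(H,\O_{H},\exp|_{\O_{H}})$ is itself a model of $T_{an}(\O$-$\exp)$ to which the first case applies, producing the desired extension. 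For Claim~1, if $x\in\O_{H}$ with $\exp x\notin\O_{H}$, I would first show $w(\exp x)\notin w(H^{\times})$ (otherwise $\exp x = b(1+\varepsilon)$ and, after taking $\log$, one reaches $\exp x\in\O_{H}$, a contradiction); then Corollary~\ref{divisible hull} gives $w(H\langle\exp x\rangle^{\times}) = w(H^{\times}) \oplus \mathbb{Q}w(\exp x)$, yielding a log-closed extension to which $\varphi$ extends by o-minimality of $T_{an}$, with Lemma~\ref{Lemma A} ensuring that the induced convex ring agrees with $\O\cap H\langle \exp x\rangle$, contradicting maximality of $\varphi$.

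The main obstacle is \textbf{Claim~2}, since we do not have total exponentiation and must synthesize new elements of $\O$ whose residues enlarge $\res(\O_{H})$. Following the strategy from \cite{Van1}, given $x\in\O$ with $\res(x)\notin\res(\O_{H})$ I would inductively set $x_{0}=|\log x - a|^{\pm 1}$ and $x_{n+1}=|\log x_{n}-a_{n}|$, choosing $a,a_{n}\in\O_{H}$ by Lemma~\ref{Nuevo} so that at each step $w(x_{n})\notin w(H^{\times})$ and $\res(x_{n})\notin\res(\O_{H})$; the subtle point is verifying $\mathbb{Q}$-linear independence of the $w(x_{n})$ over $w(H^{\times})$ and the sign/size control $w(x_{n})<w(\log x_{n})\leq w(x_{n+1})<0$. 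The union $H'=\bigcup_{n}H\langle x_{0},\dots,x_{n}\rangle$ is then log-closed (by the additive formula for $\log$ applied to $a = b(1+\epsilon)\prod x_{i}^{q_{i}}$), and I would extend $\varphi$ step by step to $\varphi_{n+1}$ by mapping $x_{n+1}$ to the parallel sequence $y_{n+1}=|\log^{*}y_{n}-\varphi(a_{n})|$, verifying in each step that $x_{n+1}$ and $y_{n+1}$ realize corresponding cuts over $H_{n}$ and $H_{n}^{*}$ via o-minimality of $T_{an}$ and Lemma~\ref{Lemma A}. This contradicts maximality of $\varphi$ unless $\res(H)=\res(K)$, completing the proof.
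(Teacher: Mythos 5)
Your proposal follows the paper's own proof essentially verbatim: the one-line reduction via Sacks' criterion to Proposition~\ref{mainextension}, the case split on $\res(E)=\res(K)$, the Zorn's-lemma choice of a maximal log-preserving $(T_{an})_{convex}$-embedding with the same two claims, and the same inductive construction $x_{n+1}=|\log x_n-a_n|$ with $\mathbb{Q}$-linear independence of the $w(x_n)$ and the parallel sequence $y_{n+1}$ in $K^*$. This matches the paper's argument in both structure and all key steps, so there is nothing to add.
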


\section{Quantifier elimination,  completeness, weak o-minimality}

For a model $(K,\mathcal{O},\exp )$ of $T_{an}(\mathcal{O}$-$\exp )$ we
denote $\log$ the partial inverse of $\exp $ defined for $x\in (%
\mathcal{O}^{\times })^{>0}$ by 
\begin{equation*}
\log x=y\Leftrightarrow \exp y=x,
\end{equation*}%
and $\log x =0$ otherwise. We call $T_{an}(\mathcal{O}$-$\exp ,\log)$ the corresponding expansion and $T_{an^{\ast }}(\mathcal{O}$-$\exp ,\log)$ the further expansion by $^{-1}$ and $\sqrt[n]{}$ for $n>1$.

By definition, $T_{an^{\ast }}(\mathcal{O}$-$\exp ,\log)$ has a universal axiomatization, except for the axiom $%
\exists x\in K\setminus \mathcal{O}$, thus any substructure of a model of this theory with $\mathcal{O}\not=K$ is again a model. Since it is model complete by Theorem \ref{modelcomplete}, to prove that it has quantifier
elimination\ it is enough to check that each substructure of a model of the
theory has a $T_{an^{\ast }}(\mathcal{O}$-$\exp ,\log)$-\emph{closure}
in the following sense:

\begin{Lema}\label{lema18}
\label{closure} Let $(E,\mathcal{O}_{E},\exp _{E},\log_{E})$ be a
substructure of a model $(K,\mathcal{O},\exp ,\log)$ of $T_{an^{\ast
}}(\mathcal{O}$-$\exp ,\log)$. There is a model $(F,\mathcal{O}%
_{F},\exp _{F},\log_{F})$ of $T_{an^{\ast }}(\mathcal{O}$-$\exp,\log)$ extending $(E,\mathcal{O}_{E}\exp _{E},\log_{E})$, such that $(F,\mathcal{O}_{F},\exp _{F},\log_{F})$ can be embedded
over $(E,\mathcal{O}_{E},\exp _{E},\log_{E})$ into every model of $%
T_{an^{\ast }}(\mathcal{O}$-$\exp ,\log)$ extending $(E,\mathcal{O}%
_{E},\exp _{E},\log_{E}).$

\end{Lema}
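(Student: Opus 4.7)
I would begin by invoking the key observation, already highlighted in the paper just before the statement, that $T_{an^{\ast}}(\O$-$\exp,\log)$ is universally axiomatizable except for the single existential axiom $\exists x\,(x\notin \O)$; in particular, with $\log$ treated as a function symbol, the surjectivity requirement E4 becomes the universal sentence $\forall y\,(\O(y)\wedge y>1 \to \O(\log y)\wedge \exp(\log y)=y)$. Consequently, every $L_{+}$-substructure with proper valuation ring is automatically a model, which reduces the closure construction to a single nontrivial case.

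The plan is to split on whether $\O_E\subsetneq E$. If so, I would set $F:=E$; it is already a model by the observation above, and the identity embeds it into every extension. If instead $\O_E=E$, I would pick any $t\in K\setminus\O$ with $t>0$; convexity of $\O$ together with $E\subseteq \O$ then forces $t>e$ for every $e\in E$. Let $F$ be the $L_{+}$-substructure of $(K,\O,\exp,\log)$ generated by $E\cup\{t\}$; it is a model since $t$ witnesses properness and the remaining axioms are universal. For universality, given any model $(M,\O_M,\exp_M,\log_M)$ extending $E$, I pick $t'\in M\setminus \O_M$ with $t'>0$ (so $t'>E$ by the same convexity argument), set $\psi(t):=t'$, and extend $\psi$ termwise.

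The main obstacle will be verifying that $\psi$ is an $L_{+}$-embedding. Well-definedness of the $L_{an^{\ast}}$-part reduces via QE for $T_{an^{\ast}}$ to the claim that $t$ and $t'$ realize the same quantifier-free $L_{an^{\ast}}$-type over $E$, which is immediate: since $t>E>0$, polynomial signs over $E$ are determined by leading coefficients, and restricted analytic functions vanish on any tuple containing a coordinate with $|\cdot|>1$. Preservation of $\O$-membership then follows from the resulting invariance of asymptotic growth. To preserve $\exp$ and $\log$ I would argue as in the case $\res(E)=\res(K)$ of the model-completeness proof: in Case 2 every nonzero $a\in E$ is a unit of $\O$ (since $a^{-1}\in E\subseteq \O$), so $\res|_E$ is injective, and by induction on term complexity every $x\in \O_F$ decomposes as $x=e_x+\delta_x$ with $e_x\in E$ and $\delta_x\in \oo$; axioms E1 and E2 then give $\exp x=\exp_E(e_x)\cdot \mathbf{e}(\delta_x)$, a formula that transfers under $t\mapsto t'$ because $e_x$ is fixed and $\delta_x$ is an $L_{an^{\ast}}$-term in $t$. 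A parallel argument handles $\log$ on positive units of $\O_F$, and a final appeal to Lemma \ref{Lemma A} confirms $\psi(\O_F)=\O_M\cap \psi(F)$, completing the embedding $F\hookrightarrow M$ over $E$ and thereby yielding the closure.
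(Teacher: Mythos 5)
Your overall strategy coincides with the paper's: reduce to the case $\O_E=E$ via the universal axiomatization, adjoin a single element realizing the cut above $E$, identify the resulting valuation ring with the convex hull of $E$, and transfer $\exp$ and $\log$ through the decomposition $x=e_x+\delta_x$ with $e_x\in E$ and $\delta_x$ infinitesimal over $E$ (your choice $t\notin\O$ rather than merely $t>E$ is harmless and even makes $(F,\O_F)$ an honest substructure of $(K,\O)$). The one step that fails as written is the ``final appeal to Lemma \ref{Lemma A}'': that lemma requires the new generator $y$ to be a \emph{unit of the smaller ring} $\O_1$ with $w(y)\notin w(H^{\times})$, whereas your $t'$ lies \emph{outside both} candidate rings (it exceeds $\O_M$, hence a fortiori the convex hull of $E$), and $1/t'$ lies in the maximal ideal rather than being a unit; so no choice of generator meets the hypotheses. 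The identity you need, $\psi(\O_F)=\O_M\cap E\langle t'\rangle$, is instead the content of \cite[Lemma 3.7 and Remark 3.8]{Van10}, which is exactly what the paper cites at this point; it rests on polynomial boundedness of $T_{an}$: any $f(t')\in E\langle t'\rangle$ exceeding $E$ must exceed $(t')^{1/n}$ for some $n$, hence cannot lie in $\O_M$ once $t'\notin\O_M$. Without some such growth argument the inclusion of $\O_M\cap E\langle t'\rangle$ in the convex hull of $E$ is not justified, and this is the mathematical heart of the ``preservation of $\O$-membership'' you pass over as ``invariance of asymptotic growth.''

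Two smaller points. Your justification that $t$ and $t'$ have the same quantifier-free $L_{an^{\ast}}$-type over $E$ (leading coefficients of polynomials, vanishing of restricted analytic functions on large arguments) does not handle nested terms such as $f(1/t)$ with $|1/t|<1$; the clean argument is that $t$ and $t'$ realize the same cut over the elementary $L_{an^{\ast}}$-substructure $E$, so o-minimality yields an $L_{an^{\ast}}$-isomorphism $E\langle t\rangle\cong E\langle t'\rangle$ over $E$, which is what the paper invokes. Likewise, the decomposition $x=e_x+\delta_x$ for $x\in\O_F$ is more safely obtained from o-minimality (the $E$-definable function defining $x$ has a limit in $E\cup\{\pm\infty\}$ along the cut of $t$) than from an induction on term complexity, and the claim $\delta_x\in\oo$, as opposed to $\delta_x$ merely being $E$-infinitesimal, again uses $t\notin\O$ together with polynomial boundedness. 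All of these are repairable with tools already present in the paper, so the proposal is sound in outline but has a genuine gap at the identification of the valuation ring of the image.
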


\begin{proof} If $E\neq \mathcal{O}_{E}$, then in fact $(E,\mathcal{O}%
_{E},\exp _{E},\log_{E})$ is a model of $T_{an^{\ast }}(\mathcal{O}$-$%
\exp ,\log)$ and we finish. Otherwise, pick $x\in K$, $x>E$, and define $F$ as the $L_{an^{\ast }}$-substructure $E\langle x\rangle $
of $K$ generated by $E(x)$ and $\mathcal{O}_{F}$ as the convex
closure of $E$ in $F$. Clearly, $x\not\in \O_F$, and $E<x<|E\setminus E|$, thus $E=\O_F\cap E\langle x\rangle$ by \cite[Lemma 3.7 and Remark 3.8]{Van10}. Hence, $(E,E)\subseteq (F,\O_F)$.

To see that $\mathcal{O}_{F}$ is exp and $\log$-closed in $K$, given $z\in \mathcal{O}%
_{F}$ find $a,b\in E$ such that $a<z<b,$ then $\exp a<\exp z<\exp b$
and since $\exp a,\exp b\in E$ then $\exp z\in \mathcal{O}_{F}.$ If $%
z\in (\mathcal{O}_{F}{}^{\times })^{>0}$ then $a,b$ above may be chosen
positive in $E$ and thus we have $\log a<\log z<\log b$
and thus $\log z\in \mathcal{O}_{F}.$ This shows that, $(F,\mathcal{O}%
_{F},\exp |_{F},\log|_{F})$ is a model of $T_{an}(\mathcal{O}$-$\exp ,%
\log)$, extending $\mathcal{(}E,E,\exp _{E},\log_{E}).\,\ $It
remains to see that it may be embeds in any model $(K^{\ast },\mathcal{O}%
^{\ast },\exp ^{\ast },\log^{\ast })$ extending $(E,E,\exp _{E},\
log_{E})$.

Pick any $y\in K^{\ast}$, $y>\O^*$, then $y>E$ and, clearly, $x$ and $y$ realize the same cut over $E$. Thus, the $%
L_{an^{\ast }}$-substructure $F^{\prime }=E\langle y\rangle $ of $K^{\ast }$
defined as before is $L_{an^{\ast }}$-isomorphic to $E\langle x\rangle $ by
an isomorphism $\phi :F\rightarrow F^{\prime }$ fixing $E$ which must send $%
\mathcal{O}_{F}$ to $\mathcal{O}_{F^{\prime }}$ (being both convex closures
of $E).$ Moreover, $y\not\in \O_{F'}$; therefore, by \cite[Lemma 3.7]{Van10} again, and knowing that $y\not\in \O^*$ then $\O_F=\O^*\cap E\langle y\rangle$ and thus $(F',\O_{F'})\subseteq (K^*,\O^*)$.

To see that $\phi $ preserves $\exp $ or $\log,$ notice that
by Corollary \ref{divisible hull} we have $w(F^{\times })=w(E^{\times })\oplus 
\mathbb{Q}w(x)$. Therefore, for each $a\in \O_{F}^{\times }$, there are $b\in
E$, $\epsilon \in F$ with $w(\epsilon )>0$ and $q\in \mathbb{Q}$ such that $%
a=b(1+\epsilon )x^{q}$. Then since $\log x=\log y=0,$ $\log b\in E$, and $\phi $ preserves $E$ and restricted analytic functions,%
\begin{eqnarray*}
\phi (\log a) &=&\phi (\log b)+\phi (\log(1+\epsilon ))=%
\log^{\ast } b +\log^{\ast }(1+\phi (\epsilon )) \\
&=&\log^{\ast }(\phi (b(1+\epsilon )y^{q}))=\log^{\ast } \phi
(a).
\end{eqnarray*}%
Therefore, 
\begin{equation*}
\phi :(F,\mathcal{O}_{F},\exp |_{F},\log|_{F})\approx (F^{\prime },%
\mathcal{O}_{F^{\prime }},\exp ^{\ast }|_{F^{\prime }},\log^{\ast
}|_{F^{\prime }})\subseteq (K^{\ast },\mathcal{O}^{\ast },\exp ^{\ast },%
\log^{\ast })
\end{equation*}%
over $(E,\mathcal{O}_{E},\exp _{E},\log_{E})$.

\end{proof}

\begin{Tma}
\label{quantif} The theory $T_{an^{\ast }}(\mathcal{O}$-$\exp ,\log)$
has quantifier elimination.
\end{Tma}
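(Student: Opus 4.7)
The plan is to deduce quantifier elimination from model completeness (Theorem \ref{modelcomplete}) together with Lemma \ref{lema18}, via the standard criterion that a theory $T$ admits quantifier elimination if and only if whenever $M_{1}, M_{2}\models T$ share a common substructure $E$, one has $M_{1}\equiv_{E}M_{2}$.

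First I would observe that model completeness transfers from $T_{an}(\mathcal{O}\text{-}\exp)$ to its definitional expansion $T_{an^{\ast}}(\mathcal{O}\text{-}\exp,\log)$. The symbols $^{-1}$, $\sqrt[n]{}$ and $\log$ are $0$-definable in $T_{an}(\mathcal{O}\text{-}\exp)$ (under the conventions fixed in the paper for $0$, non-positive inputs, and elements outside $\mathcal{O}$), so an embedding between models of the richer theory is the same as an embedding of the underlying $T_{an}(\mathcal{O}\text{-}\exp)$-models, which Theorem \ref{modelcomplete} forces to be elementary in the full language.

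Next, with model completeness of $T_{an^{\ast}}(\mathcal{O}\text{-}\exp,\log)$ in hand, I would apply Lemma \ref{lema18}. Given two models $M_{1}, M_{2}$ of $T_{an^{\ast}}(\mathcal{O}\text{-}\exp,\log)$ with a common substructure $E$, Lemma \ref{lema18} provides a closure $F$ of $E$ that is itself a model of the theory and embeds over $E$ into every extending model, and in particular into both $M_{1}$ and $M_{2}$. Identifying $F$ with its two images, model completeness converts the inclusions $F\subseteq M_{1}$ and $F\subseteq M_{2}$ into elementary embeddings, so $M_{1}\equiv_{E}F\equiv_{E}M_{2}$, which is exactly the criterion above.

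The substantive work has already been carried out in Theorem \ref{modelcomplete} and Lemma \ref{lema18}, so I do not expect any further obstacle here; the only mild point to verify is the transfer of model completeness to the expanded language, and that is immediate from the $0$-definability of the new symbols.
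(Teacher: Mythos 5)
Your proposal is correct and follows essentially the same route as the paper: the authors likewise deduce quantifier elimination from model completeness (Theorem \ref{modelcomplete}, transferred to the definitional expansion by $\log$, $^{-1}$, $\sqrt[n]{}$) together with the existence of closures of substructures provided by Lemma \ref{lema18}. Your explicit reduction to substructure completeness via $M_{1}\equiv_{E}F\equiv_{E}M_{2}$ is just a spelled-out version of the criterion the paper invokes implicitly.
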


Now, by Lemma \ref{lemanuevo9} $ (\mathbb{R}_{an^{\ast }},\mathbb{R},\exp ,\log)$ is a
substructure of any model of $T_{an^{\ast }}(\mathcal{O}$-$\exp ,\log)$; hence, by the proof of Lemma \ref{lema18}, its closure $(\mathbb{R}_{an^{\ast }}\langle x\rangle,\mathcal{O}_{\mathbb{R}},\exp,\log)$, where $\mathcal{O}_{\mathbb{R}}$ is the natural valuation ring of $\mathbb{R}%
_{an^{\ast }}\langle x\rangle $, is a prime model of $T_{an^{\ast }}(%
\mathcal{O}$-$\exp ,\log)$. Since ${}^{-1}$, $\sqrt[n]{}$, and $\log$
are definable, its reduct to the language $L_{an,\mathcal{O},\exp }$ is
a prime model of $T_{an}(\mathcal{O}$-$\exp )$; hence,

\begin{Tma}
\label{complete}$T_{an}(\mathcal{O}$-$\exp )$ is complete, with prime model $(\mathbb{R}_{an}\langle x\rangle,\O_{\mathbb{R}},\exp)$.
\end{Tma}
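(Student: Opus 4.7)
The plan is to derive completeness from model completeness (Theorem \ref{modelcomplete}) together with the existence of a prime model, using the standard observation that a model-complete theory with a prime model is complete: any two models receive an embedding from the prime model, model completeness upgrades each embedding to an elementary one, so the two models are elementarily equivalent. Thus the real content is to identify the prime model and verify it has that property.

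My candidate is the structure $(\mathbb{R}_{an}\langle x\rangle, \mathcal{O}_{\mathbb{R}}, \exp)$ named in the statement, where $x$ is any element above $\mathbb{R}$ in an ambient model, $\mathbb{R}_{an}\langle x\rangle$ is the $T_{an}$-definable closure of $\mathbb{R}(x)$, and $\mathcal{O}_{\mathbb{R}}$ is the natural valuation ring of this field (the convex hull of $\mathbb{R}$). To check primality, I would pass to the expanded language $L_{an^*, \O, \exp, \log}$ and use Lemma \ref{lemanuevo9}, which exhibits $(\mathbb{R}_{an^*}, \mathbb{R}, e^x, \log)$ as a common substructure of every model of $T_{an^*}(\O\text{-}\exp, \log)$ — with $\O$ interpreted as $\mathbb{R}$ itself, so not yet a model since $\O$ is required to be proper. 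Applying the construction in the proof of Lemma \ref{lema18} to this substructure (the case $E = \mathcal{O}_E$ treated there) picks a single $x > \mathbb{R}$ and forms the closure $(\mathbb{R}_{an^*}\langle x\rangle, \mathcal{O}_{\mathbb{R}}, \exp, \log)$, which is a model; the same proof exhibits an embedding of this closure, over the original substructure, into any ambient model. Reducing to $L_{an, \O, \exp}$ preserves primality because $^{-1}$, $\sqrt[n]{}$ are definable in $T_{an}$ and $\log$ is definable from $\exp$.

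The one point calling for care is that the closure must be independent, up to isomorphism, of which $x > \mathbb{R}$ is chosen in different ambient models. Any two such choices realize the same cut over $\mathbb{R}_{an^*}$, so they give $L_{an^*}$-isomorphic extensions sending convex closure to convex closure — exactly the uniqueness argument already carried out inside the proof of Lemma \ref{lema18}, which I would invoke verbatim rather than redo. With the prime model in hand, Theorem \ref{modelcomplete} turns the two prime embeddings into elementary embeddings, and completeness follows. I do not expect a serious obstacle here; the substantive work has already been absorbed into model completeness and Lemma \ref{lema18}, and this theorem is essentially the formal harvest.
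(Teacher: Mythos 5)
Your proposal matches the paper's argument essentially verbatim: the paper also obtains the prime model by applying the closure construction of Lemma \ref{lema18} (the case $E=\mathcal{O}_E$) to the common substructure $(\mathbb{R}_{an^{\ast}},\mathbb{R},\exp,\log)$ furnished by Lemma \ref{lemanuevo9}, passes to the reduct using definability of $^{-1}$, $\sqrt[n]{}$, and $\log$, and concludes completeness from model completeness plus the prime model. The uniqueness-of-cut point you flag is exactly the step already carried out inside the proof of Lemma \ref{lema18}, as you say, so there is nothing to add.
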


$T_{an}(\mathcal{O}$-$\exp )$ can not be o-minimal since the valuation ring
can not have frontier points, but we know $(T_{an})_{convex}$ is weakly
o-minimal. We may extend this to $T_{an}(\mathcal{O}$-$\exp ).$ Notice first
the following immediate but somehow unexpected consequence of model
completeness and the existence of a prime model.

\begin{Lema}\label{exttotal}
Any model of $T_{an}(\mathcal{O}$-$\exp )$ may be embedded in a model where
the partial exponential of the valuation ring may be extended to a total
exponential of the field.
\end{Lema}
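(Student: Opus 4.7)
The plan is to exhibit a single model of $T_{an}(\O$-$\exp)$ which is both $|K|^{+}$-saturated in $L_+$ and admits a total extension of its partial exp to the whole field; by completeness and saturation the given $(K,\O,\exp)$ will then embed elementarily into it, automatically inheriting the desired property.

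First I would fix an explicit model with the extendability property by taking a non-archimedean model $(L,E)$ of $T_{an}(\exp)$, for instance $L=\mathbb{T}$, together with its natural valuation ring $\O_L=\O_w$. Since $\O_L$ is proper, $E$-closed and $\log$-closed in such an $L$, the structure $(L,\O_L,E|_{\O_L})$ is a model of $T_{an}(\O$-$\exp)$ whose partial exp is literally the restriction of the total exponential $E$. I would then enlarge the signature to $L_+\cup\{E\}$, with $E$ a fresh unary function symbol for the total exponential, and pass to a $|K|^{+}$-saturated elementary extension $(L^{*},\O^{*},\exp^{*},E^{*})$ of $(L,\O_L,E|_{\O_L},E)$ in this enlarged language. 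Because $L_+\subseteq L_+\cup\{E\}$, saturation descends to the $L_+$-reduct $(L^{*},\O^{*},\exp^{*})$; simultaneously, the first-order axioms for totality of $E$ and the exponential laws ensure that $E^{*}$ is a total exponential on $L^{*}$ extending $\exp^{*}$.

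Completeness of $T_{an}(\O$-$\exp)$ (Theorem \ref{complete}, established via the prime model) then yields $(K,\O,\exp)\equiv(L^{*},\O^{*},\exp^{*})$, and $|K|^{+}$-saturation of the right-hand side produces an $L_+$-embedding $(K,\O,\exp)\hookrightarrow(L^{*},\O^{*},\exp^{*})$, automatically elementary by model completeness (Theorem \ref{modelcomplete}). Composing with the ambient $E^{*}$ exhibits $(K,\O,\exp)$ as a substructure of a model of $T_{an}(\O$-$\exp)$ whose partial exp extends to the total exponential $E^{*}$ on $L^{*}$, as desired. The main obstacle I anticipate is the need to enlarge the language with the symbol $E$: totality of the exponential is not first-order in $L_+$ alone, so a $|K|^{+}$-saturated model of $T_{an}(\O$-$\exp)$ produced directly in $L_+$ would carry no guarantee of admitting any total extension of its partial exp; the device of treating $E$ as a primitive symbol is precisely what makes totality first-order and hence preserved under saturated elementary extension.
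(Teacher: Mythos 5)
Your proof is correct and follows essentially the same route as the paper: both arguments produce a $|K|^{+}$-saturated model of $T_{an}(\mathcal{O}$-$\exp)$ by restricting the total exponential of a model of $T_{an}(\exp)$ to a valuation ring, and then embed $(K,\mathcal{O},\exp)$ into it via model completeness together with completeness and the prime model. Your one refinement --- carrying the total exponential as an extra function symbol and saturating in the enlarged language so that $L_{+}$-saturation of the reduct is automatic --- is a slightly more careful way of securing saturation of the target than the paper's direct expansion of a saturated model of $T_{an}(\exp)$ by its (non-definable) natural valuation ring.
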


\begin{proof} Let $(K,\mathcal{O},\exp )$ be a model of $T_{an}(\mathcal{O%
}$-$\exp )$ and let $(K^*,\exp^*)$ be any $|K|^{+}$-saturated model
of $T_{an}(\exp )$. Consider the expansion $%
(K^*,\exp^*,\mathcal{V})$ where $\mathcal{V}$ is the natural
valuation ring of $K^*$, then $\exp^*(\mathcal{V})=(\mathcal{V}%
^{\times })^{>0}$ and thus  $(K^{\ast},\mathcal{V},\exp ^{\ast}\upharpoonright \mathcal{V})$ is a $|K|^{+}$-saturated model of $T_{an}(\mathcal{O}$-$\exp ).$ As the
prime model of $T_{an}(\mathcal{O}$-$\exp )$ embeds in both models, there must
exist, by the model completeness criterium used above, an embedding $%
\varphi :$ $(K,\mathcal{O},\exp )\rightarrow (K^{\ast},\mathcal{V},\exp ^{\ast}\upharpoonright \mathcal{V}),$
which proves the claim.
\end{proof}

\medskip

Combining the previous observation with the result of Baizhanov 
\cite{Baijanov}, \cite[Theorem 63]{Baijanov2} that a o-minimal structure expanded by a convex predicate is weakly o-minimal, we obtain

\begin{Tma}
\label{weakly} The theory $T_{an}(\mathcal{O}$-$\exp )$ is weakly o-minimal.
\end{Tma}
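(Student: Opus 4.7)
The plan is to leverage Lemma \ref{exttotal} to reduce weak o-minimality of $T_{an}(\mathcal{O}$-$\exp)$ to the case of a model of $T_{an}(\exp)$ equipped with its natural valuation ring, where Baizhanov's theorem applies directly. Concretely, I would fix an arbitrary model $(K,\mathcal{O},\exp)$ of $T_{an}(\mathcal{O}$-$\exp)$ and aim to show that every definable subset of $K$ is a finite union of convex sets.

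First I would invoke Lemma \ref{exttotal} to embed $(K,\mathcal{O},\exp)$ into a structure $(K^{\ast},\mathcal{V},\exp^{\ast}\upharpoonright \mathcal{V})$ whose partial exponential is the restriction of a total exponential $\exp^{\ast}$ on $K^{\ast}$ and whose distinguished valuation ring is the natural valuation ring of $K^{\ast}$. By model completeness of $T_{an}(\mathcal{O}$-$\exp)$ (Theorem \ref{modelcomplete}), this embedding is elementary. Since $(K^{\ast},\exp^{\ast})\models T_{an}(\exp)$ is o-minimal and $\mathcal{V}$ is convex in $K^{\ast}$, Baizhanov's theorem then yields that the expansion $(K^{\ast},\exp^{\ast},\mathcal{V})$ is weakly o-minimal.

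Two routine reductions would finish the argument. I would first check that the reduct $(K^{\ast},\mathcal{V},\exp^{\ast}\upharpoonright \mathcal{V})$ remains weakly o-minimal, because every $L_{+}$-formula is equivalent to a formula of the richer language (replacing each term $\exp(t)$ by $\exp^{\ast}(t)$ guarded by $\mathcal{V}(t)$), so the $L_{+}$-definable subsets of $K^{\ast}$ form a subfamily of the finite unions of convex sets supplied by Baizhanov. Then I would transfer weak o-minimality back along the elementary inclusion $(K,\mathcal{O},\exp)\preceq (K^{\ast},\mathcal{V},\exp^{\ast}\upharpoonright \mathcal{V})$: for any $L_{+}$-formula $\varphi(x,\bar a)$ with $\bar a\in K$, one has $\varphi(K,\bar a)=\varphi(K^{\ast},\bar a)\cap K$, and the intersection of a finite union of convex subsets of $K^{\ast}$ with the ordered substructure $K$ is a finite union of convex subsets of $K$.

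The essential content is carried by Lemma \ref{exttotal}; the remaining work is just the two bookkeeping reductions above. The only subtle point I would flag is that weak o-minimality is not automatically preserved under elementary equivalence for arbitrary theories, so one really does need the elementary embedding produced by model completeness (rather than a mere embedding) in order to transfer the property from $(K^{\ast},\mathcal{V},\exp^{\ast}\upharpoonright \mathcal{V})$ back down to the chosen model $(K,\mathcal{O},\exp)$.
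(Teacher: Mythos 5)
Your argument is correct and follows essentially the same route as the paper: embed the given model into $(K^{\ast},\mathcal{V},\exp^{\ast}\upharpoonright\mathcal{V})$ via Lemma \ref{exttotal}, apply Baizhanov's theorem to the o-minimal structure $(K^{\ast},\exp^{\ast})$ expanded by the convex predicate $\mathcal{V}$, and pull the property back along the elementary embedding furnished by model completeness. Your two ``bookkeeping'' reductions (passing to the reduct and intersecting convex sets with the elementary substructure) are exactly the steps the paper compresses into ``a fortiori'' and ``inherited by elementary substructures,'' so nothing is missing.
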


\begin{proof} Let $(K,\mathcal{O},\exp )$ be a model of $T_{an}(\mathcal{O%
}$-$\exp ),$ the reduct $(K^{\ast},\exp ^{\ast})$ of the
extension obtained in Lemma \ref{exttotal} is a model of $T_{an}(\exp )$ and thus it is
o-minimal; therefore, $(K^{\ast},\exp ^{\ast},\mathcal{V})$ is weakly o-minimal by Baizhanov result, and a fortiori
the same is true of $(K^{\ast},\mathcal{V},\exp ^{\ast}\upharpoonright \mathcal{V}).$ But this property is
inherited by elementary substructures; therefore, $(K,\mathcal{O},\exp )$ is
weakly o-minimal.

\end{proof}


Finally, notice that if we add a new constant symbol $c$ to the language, the theory 
\begin{equation*}
T_{an^{\ast }}(\mathcal{O}\text{-}\exp ,\log,c)=T_{an^{\ast }}(%
\mathcal{O}\text{-}\exp ,\log)\cup \{c>0\wedge c\notin \mathcal{O}\}
\end{equation*}%
has a universal axiomatization and is still model complete. Therefore it admits definable Skolem functions by a trivial application of a result of P. Scowcroft (\cite[Theorem 1]{Scow}). By
definability of $\ ^{-1},$ $\sqrt[n]{},$ and $\log$ in $T_{an}(\mathcal{O}$%
-$\exp ,c)$ the same is true of the later theory.

\section{Final observation}

Theorem \ref{complete} implies, after Example 3, that $T_{an}(\O$-$\exp)=Th(\mathbb{R}((t^{\Gamma}))_{an},\O_w,\exp)$ for any Hahn field with divisible group $\Gamma$ and its natural valuation ring $\O_w$. Consider the theory of fields of the form $(\mathbb{R}((t^{\Gamma})),\O_w,\exp)$, where $\Gamma$ is not necessarily divisible, as a valued field with partial exponentiation. It is reasonable to expect an Ax-Kochen-Ershov type result for this kind of structures.

\medskip

\noindent \textbf{Acknowledgements.} This paper covers some results obtained in the PhD-thesis of the first author. The authors thanks Lou van den Dries for his suggestion to study this theory and his helpful remarks, and the referee for his criticism and comments to improve the paper. 

\medskip
\section*{Declarations}
\noindent \textbf{Funding.} No funding was received to assist with the preparation of this manuscript.







\begin{thebibliography}{99}
\bibitem{van0} Aschenbrenner, M., van den Dries, L., van der Hoeven, J.: Towards a model theory for transseries. Notre Dame J. Form. Log. 54, no. 3-4, 279-310 (2013).
\bibitem{Van6} Aschenbrenner, M., van den Dries, L., van der Hoeven, J.: 
Asymptotic Differential Algebra and Model Theory of Transseries.
Ann. of Math. Stud. 195, Princeton University Press (2017)
\bibitem{Baijanov} Baizhanov, B. S.: Expansion of an o-minimal model by unary convex predicates. Researches in theory of algebraic systems (Nurmagambetov, T., editor, in Russian), Karaganda State University, 3-23 (1995).

\bibitem{Baijanov2} Baizhanov, B. S.: Expansion of a model of a weakly o-minimal theory by a family of unary predicates. J. Symbolic Logic 66(3), 1382-1414 (2001)


\bibitem{Chang} Chang, C., Keisler, H.: Model theory, (3rd
ed.). Studies in Logic and the Foundation of Mathematics. North-Holland,
Amsterdam, 73 (1990)

\bibitem{Van9} Denef J., van den Dries, L.: $p$-adic and real
subanalytic sets. Ann. of Math. 128, 79-138 (1988)

\bibitem{Dickmann} Dickmann, M.: Elimination of quantifiers for
ordered valuation rings. J. Symbolic Logic 52, 116-128 (1987)

\bibitem{End} Endler, O.: Valuation theory. Springer-Verlag, New
York (1972)

\bibitem{Eng} Engler, A., Prestel A.: Valued Fields. Springer Monographs in Mathematics, (2205)

\bibitem{Gabrielov} Gabrielov, A.: Projections of semi-analytic sets.
Functional Anal. Appl. 2, 282-291 (1968)

\bibitem{Haskell} Haskell, D.: Model theory of analytic functions: Some
historical comments. Bulletin of Symbolic Logic 18(3),368-381 (2012)

\bibitem{Hod} Hodges, W.: Model theory. Encyclopedia of Mathematics
and its Applications. Cambridge University Press 42, (1993)

\bibitem{Kuh4} Kuhlmann, S., On the structure of nonarchimedean exponential fields I. Arch. Math. Logic 34, 145-182 (1995)

\bibitem{Kuhlibro} Kuhlmann, S.: Ordered exponential fields. The Fields Institute Monograph Series, Vol. 12, AMS Publications, 164 pages (2000).

\bibitem{Kuhshe} Kuhlmann, F., Kuhlmann, S., Shelah, S.: Exponentiation in power series fields. Proc. Amer. Math. Soc. 125, 3177-3183 (1997)

\bibitem{Kuh5} Kuhlmann, F., Kuhlmann, S.: The exponential rank of
nonarchimedean exponential fields. Delzell and Madden (eds): Real algebraic
geometry and ordered structures. Contemp. Math. 253, 181-201 (2000)



\bibitem{P-S} Pillay, A., Steinhorn, C.: Definable sets in ordered
structures. Bull. of the Amer. Math. Soc. 11(1), 159-162 (1984)

\bibitem{sacks} Sacks, G.: Saturated model theory. W. A. Benjamin,
Inc., Reading, Mass. (1972)

\bibitem{Scow} Scowcroft, P.: A note on Definable Skolem Functions.
J. Symbolic Logic 53(3), 905-911 (1988)




\bibitem{Van8} van den Dries, L.: A generalization of the
Tarski-Seidenberg theorem, and some nondefinability results. Bull. AMS. 15, 189-193 (1986)


\bibitem{Van1} van den Dries, L., Macintyre, A., Marker, D.: The
Elementary Theory of Restricted Analytic Fields with Exponentiation. Annals
of Mathematics 140(1), 183-205 (1994)

\bibitem{Van10} van den Dries, L., Lewenberg, A.: T-Convexity and Tame Extensions. The Journal of Symbolic Logic 60(1), 74-102 (1995)


\bibitem{Van2} van den Dries, L., Macintyre, A., Marker, D.: Logarithmic-Exponential Power Series. J. London Math. Soc. 56(2), 417-434 (1997)


\bibitem{main} van den Dries, L.: Lectures on the model theory of
valued fields. Lecture Notes in Mathematics 2111, 55-157 (2014)

\bibitem{Wilk1} Wilkie, A.: Model completeness results for
expansions of the ordered field of real numbers by restricted Pfaffian
functions and the exponential function. J. Amer. Math. Soc. 9(4), 1051-1094 (1996)
\end{thebibliography}

\end{document}